\newcommand{\RR}{\mathbb{R}}
\newcommand{\bfzero}{\boldsymbol 0}
\newcommand{\mcK}{\mathcal{K}}
\newcommand{\mcF}{\mathcal{F}}
\newcommand{\mcA}{\mathcal{A}}
\newcommand{\mcT}{\mathcal{T}}
\newcommand{\mcH}{\mathcal{H}}
\newcommand{\mcX}{\mathcal{X}}
\newcommand{\jump}[1]{[#1]}
\newcommand{\Gammah}{{\Gamma_h}}
\newcommand{\nablas}{\nabla_\Gamma}
\newcommand{\nablash}{\nabla_{\Gamma_h}}
\newcommand{\Ps}{{P}_\Gamma}
\newcommand{\Psh}{{P}_{\Gamma_h}}
\newcommand{\Qsh}{{Q}_{\Gamma_h}}
\newcommand{\foralls}{\forall\,}
\newcommand{\dx}{\,\mathrm{d}x}
\newcommand{\IR}{\mathbb{R}}
\DeclareMathOperator{\spann}{span}
\DeclareMathOperator{\dist}{dist}
\numberwithin{equation}{section}
\newtheorem{lemma}{Lemma}[section]
\newtheorem{theorem}{Theorem}[section]
\newdefinition{remark}[section]{Remark}
\newproof{proof}{Proof}
\journal{\phantom{journal}}
\begin{document}
\begin{frontmatter}

  \title{Full Gradient Stabilized Cut Finite Element Methods for
Surface Partial Differential Equations}

\author[ucl]{Erik Burman}
\ead{e.burman@ucl.ac.uk}

\author[ju]{Peter Hansbo}
\ead{peter.hansbo@ju.se }

\author[umu]{Mats G.\ Larson}
\ead{mats.larson@umu.se}

\author[umu]{Andr\'e Massing\corref{cor1}}
\ead{andre.massing@umu.se}
\cortext[cor1]{Corresponding author}

\author[kth]{Sara Zahedi}
\ead{sara.zahedi@math.kth.se}

\address[ucl]{Department of Mathematics, University College London, London, UK--WC1E 6BT, United Kingdom}
\address[ju]{Department of Mechanical Engineering, J\"onk\"oping University, SE-55111 J\"onk\"oping, Sweden.}
\address[umu]{Department of Mathematics and Mathematical Statistics, Ume{\aa} University, SE-90187 Ume{\aa}, Sweden}
\address[kth]{Department of Mathematics, KTH, SE-10044 Stockholm, Sweden}

\begin{abstract} 
We propose and analyze a new stabilized cut finite element
method for the Laplace-Beltrami operator on a closed
surface. The new stabilization term provides control of
the full $\IR^3$ gradient on the active mesh consisting of
the elements that intersect the surface. Compared to face
stabilization, based on controlling the jumps in the
normal gradient across faces between elements in the
active mesh, the full gradient stabilization is easier to
implement and does not significantly increase the number
of nonzero elements in the mass and stiffness matrices.
The full gradient stabilization term may be combined with
a variational formulation of the Laplace-Beltrami operator
based on tangential or full gradients and we present a
simple and unified analysis that covers both cases.  The
full gradient stabilization term gives rise to a
consistency error which, however, is of optimal order for
piecewise linear elements, and we  obtain optimal order a
priori error estimates in the energy and $L^2$ norms as
well as an optimal bound of the condition number. Finally,
we present detailed numerical examples where we in
particular study the sensitivity of the condition number
and error on the stabilization parameter.
\end{abstract}

\begin{keyword}
  Surface PDE \sep Laplace-Beltrami operator \sep cut finite element method
  \sep stabilization \sep condition number \sep a priori error
  estimates
\end{keyword}

\end{frontmatter}

\section{Introduction}

Cut finite elements have recently been proposed in \cite{OlshanskiiReuskenGrande2009} as a new method for 
the solution of partial differential equations on surfaces 
embedded in $\IR^3$. The main idea is to use the restriction 
of basis functions defined on a three dimensional 
(background) mesh to a discrete surface that is 
allowed to cut through the mesh in an arbitrary 
fashion. The active mesh consists of all 
elements that intersect the discrete surface. This 
approach yields a potentially ill posed stiffness 
matrix and therefore either preconditioning 
\cite{OlshanskiiReusken2014} or 
stabilization \cite{BurmanHansboLarson2015} is 
necessary. The stabilization proposed in 
\cite{BurmanHansboLarson2015} is based on adding 
a consistent stabilization term that provides control 
of the jump in the normal gradient on each of the 
interior faces in the active mesh. Further developments 
in this area include convection problems on surfaces 
\cite{OlshanskiiReuskenXu2014,BurmanHansboLarsonEtAl2015}, adaptive
methods \cite{ChernyshenkoOlshanskii2014,DemlowOlshanskii2012},
coupled surface bulk problems
\cite{BurmanHansboLarsonEtAl2014,GrossOlshanskiiReusken2014}, and time
dependent problems
\cite{OlshanskiiReuskenXu2014a,OlshanskiiReusken2014,HansboLarsonZahedi2015b,HansboLarsonZahedi2016}.
See also the review article \cite{BurmanClausHansboEtAl2014} on cut
finite element methods and references therein, and
\cite{DziukElliott2013} for a general background on finite element
methods for surface partial differential equations.

In this contribution we propose and analyze a new stabilized 
cut finite element method for the Laplace-Beltrami operator 
on a closed surface, which is based on adding a stabilization 
term that provides control of the full $\IR^3$ gradient on the 
active mesh. The advantage of the full gradient 
stabilization compared to face stabilization is that the 
full gradient stabilization term is an elementwise quantity and 
thus is very easy to implement and, more importantly, it does not 
significantly increase the number of nonzero elements in the stiffness 
matrix. 

The full gradient stabilization may be used in combination 
with a variational formulation of the Laplace-Beltrami operator based on 
tangential gradients or full gradients. In the latter case we end up 
with a simple formulation only involving full gradients. Both the 
full gradient stabilization term and variational formulation are based 
on the observation that the extension of the exact solution is constant 
in the normal direction and thus its normal gradient is zero. Since we 
are using the full gradient and not the normal part of the gradient the 
stabilization term gives rise to a consistency error which, 
however, is of optimal order for piecewise linear elements. Using the 
full gradient in the variational formulation was proposed \citet{DeckelnickElliottRanner2013} where, however, no additional 
stabilization term was included. Furthermore, it was shown in 
\cite{Reusken2013} that when the full gradient was used preconditioning 
also works.

Assuming that the discrete surface satisfies standard geometry approximation properties we show 
optimal order a priori error estimates in the energy and $L^2$ 
norms. Furthermore, we show an optimal bound on the condition 
number. Finally, we present numerical examples verifying the 
theoretical results. 
In particular, we study the sensitivity of the accuracy and the
condition number with respect to the choice of the stabilization
parameter for both full gradient and face stabilized methods and
conclude that the sensitivity is in fact considerably smaller for the
full gradient stabilization.

The outline of the paper is as follows: In Section 2 we present the 
model problem, some notation, and the finite element method, in
Section 3 we summarize the necessary preliminaries for our error
estimates, in Section 4 we show stability estimates and the optimal
bound of the condition number, in Section 5 we prove the a priori
error error estimates, and in Section 6 we present some numerical
examples. 

\section{Model Problem and Finite Element Method}

\subsection{The Continuous Surface}
\label{ssec:preliminaries}
In what follows, $\Gamma$ denotes a smooth compact hypersurface
without boundary which is embedded in ${{\RR}}^{d}$ and equipped with a
normal field $n: \Gamma \to \RR^{d}$ and signed distance
function $\rho$.
Defining
the tubular neighborhood of $\Gamma$ by
$U_{\delta_0}(\Gamma) = \{ x \in \RR^{d} : \dist(x,\Gamma) < \delta_0
\}$, the closest point projection $p(x)$ 
is the uniquely defined mapping given by
\begin{align}
  p(x) =  x - \rho(x) n(p(x))
\label{eq:closest-point-projection}
\end{align}
which maps $x \in U_{\delta_0}(\Gamma)$ to the unique point $p(x) \in
\Gamma$ such that $| p(x) - x | = \dist(x, \Gamma)$ for some $\delta_0 > 0$, see~\citet{GilbargTrudinger2001}.
The closest point projection allows the extension of a function $u$ on $\Gamma$ to its
tubular neighborhood $U_{\delta_0}(\Gamma)$ using the pull back
\begin{equation}
\label{eq:extension}
u^e(x) = u \circ p (x)
\end{equation}
In particular, we can smoothly extend the normal field $n_{\Gamma}$ to the tubular neighborhood $U_{\delta_0}(\Gamma)$.
On the other hand, 
for any subset $\widetilde{\Gamma} \subseteq U_{\delta_0}(\Gamma)$ such that 
$p: \widetilde{\Gamma} \to \Gamma $ is bijective, a function $w$ on
$\widetilde{\Gamma}$ can be lifted to $\Gamma$ by the push forward
\begin{align}
  (w^l(x))^e = w^l \circ p = w \quad \text{on } \widetilde{\Gamma}
\end{align}

A function $u: \Gamma \to \RR$ is of class $C^l(\Gamma)$ 
if there exists an extension $\overline{u} \in C^l(U)$ with $\overline{u}|_{\Gamma} = u$
for some $d$-dimensional neighborhood $U$ of $\Gamma$.
Then the tangent gradient $\nabla_\Gamma$ on $\Gamma$ is defined
by
\begin{equation}
\nablas u = \Ps \nabla \overline{u}
\label{eq:tangent-gradient}
\end{equation}
with $\nabla$ the ${{\RR}}^{d}$ gradient and $\Ps = \Ps(x)$ the
orthogonal projection of $\RR^{d}$ onto the tangent plane of $\Gamma$ at $x \in \Gamma$
given by
\begin{equation}
  \Ps = I - n \otimes n
\end{equation}
where $I$ is the identity matrix. It can easily be shown that the definition~\eqref{eq:tangent-gradient} is independent of the extension $\overline{u}$.
We let $\| w \|^2_\Gamma = (w,w)_\Gamma $ denote the $L^2(\Gamma)$ norm on $\Gamma$ and
introduce the Sobolev $H^m(\Gamma)$ space as the subset of
$L^2$ functions for which the norm
\begin{equation}
\| w \|^2_{m,\Gamma} = \sum_{k=0}^m \| D^{P,k}_\Gamma w\|_\Gamma^2,
\quad m = 0,1,2
\end{equation}
is defined. Here, the $L^2$ norm for a matrix is based on the pointwise
Frobenius norm, $D^{P,0}_\Gamma w = w$ and the derivatives $D^{P,1}_\Gamma = \Ps\nabla w,
D^{P,2}_\Gamma w = \Ps(\nabla \otimes \nabla w)\Ps$ are taken in a weak sense.
Finally, for any function space $V$ defined on $\Gamma$, we denote
the space consisting of extended functions by $V^e$ and
correspondingly, we use the notation $V^l$ to refer to the lift of a
function space~$V$ defined on $\widetilde{\Gamma}$.

\subsection{The Continuous Problem}
We consider the following problem: find $u: \Gamma \rightarrow {{\RR}}$
such that
\begin{align}
  \label{eq:LB}
-\Delta_\Gamma u = f \quad \text{on $\Gamma$}
\end{align}
where $\Delta_\Gamma$ is the Laplace-Beltrami operator on $\Gamma$
defined by
\begin{equation}
\Delta_\Gamma = \nabla_\Gamma \cdot \nabla_\Gamma
\end{equation}
and $f\in L^2(\Gamma)$ satisfies $\int_\Gamma f = 0$. The
corresponding weak statement takes the form: find
$u \in H^1(\Gamma)/\RR$
such that
\begin{equation}
a(u,v) = l(v) \quad \forall v \in H^1(\Gamma)/\RR
\end{equation}
where
\begin{equation}
a(u,v) = (\nabla_\Gamma u, \nabla_\Gamma v)_\Gamma, \qquad l(v) = (f,v)_\Gamma
\end{equation}
and $(v,w)_\Gamma = \int_\Gamma v w$ is the $L^2$ inner product.
It follows from the Lax-Milgram lemma that this problem has a unique
solution. For smooth surfaces we also have the elliptic regularity
estimate
\begin{equation}
\| u \|_{2,\Gamma} \lesssim \|f\|_\Gamma
\label{eq:ellreg}
\end{equation}

Here and throughout the paper we employ the notation $\lesssim$ to denote less
or equal up to a positive constant that is always independent of the
mesh size. The binary relations $\gtrsim$ and $\sim$ are defined analogously. 

\subsection{The Discrete Surface and Cut Finite Element Space}
\label{ssec:domain-and-fem-spaces}

Let $\widetilde{\mcT}_{h}$ be a quasi uniform mesh, with mesh parameter
$0<h\leq h_0$, consisting of shape regular simplices of an open and bounded domain $\Omega$ in
$\RR^{d}$ containing $U_{\delta_0}(\Gamma$). 
On $\widetilde{\mcT}_h$, let $\rho_h$ be a continuous, piecewise
linear approximation of the signed distance function $\rho$
and define the discrete surface $\Gamma_h$ as the zero level set of
$\rho_h$,
\begin{equation}
\Gamma_h = \{ x \in \Omega : \rho_h(x) = 0 \}
\end{equation}
We note that $\Gamma_h$ is a polygon with flat faces and we let
$n_h$ be the piecewise constant exterior unit normal to $\Gamma_h$.
We assume that: 
\begin{itemize}
\item $\Gamma_h \subset U_{\delta_0}(\Gamma)$ and that the closest
point mapping $p:\Gamma_h \rightarrow \Gamma$ is a bijection for $0< h
\leq h_0$.
\item The following estimates hold
\begin{equation} \| \rho \|_{L^\infty(\Gamma_h)} \lesssim h^2, \qquad
\| n^e - n_h \|_{L^\infty(\Gamma_h)} \lesssim h
\label{eq:geometric-assumptions-II}
\end{equation} 
\end{itemize}
These properties are, for instance, satisfied if
$\rho_h$ is the Lagrange interpolant of $\rho$.
For the background mesh $\widetilde{\mcT}_{h}$, we define the active (background)
$\mcT_h$
mesh and its set of interior faces $\mcF_h$ by
\begin{align} 
  \mcT_h &= \{ T \in \widetilde{\mcT}_{h} : T \cap \Gamma_h \neq \emptyset \}
  \label{eq:narrow-band-mesh}
  \\
  \mcF_h &= \{ F =  T^+ \cap T^-: T^+, T^- \in \mcT_h \}
  \label{eq:interior-faces}
\end{align}
The active mesh induces a partition $\mcK_h$ of
the approximated surface geometry $\Gamma_h$:
\begin{align}
  \mcK_h&=\{K = \Gamma_h \cap T : T \in \mcT_h \}
\end{align}
The various set of geometric entities are illustrated in
Figure~\ref{fig:domain-set-up}.
\begin{figure}[htb]
  \begin{center}
    \includegraphics[width=0.45\textwidth]{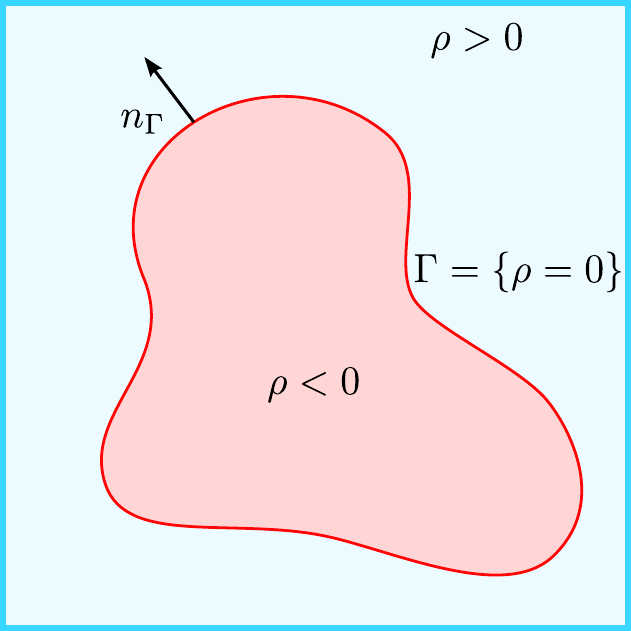}
    \hspace{0.03\textwidth}
    \includegraphics[width=0.45\textwidth]{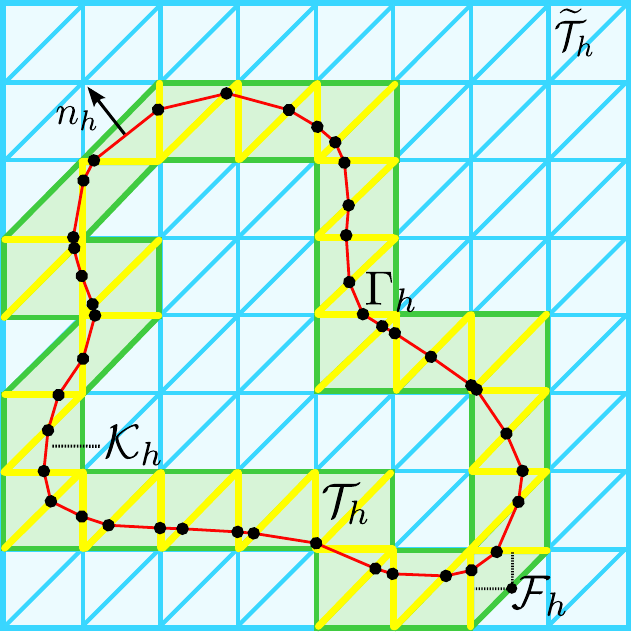}
  \end{center}
  \caption{Domain set-up}
  \label{fig:domain-set-up}
\end{figure}
We observe that the active  mesh $\mcT_h$ gives raise to a discrete or approximate
$h$-tubular neighborhood of $\Gamma_h$, which we denote by
\begin{align}
 \mcT_h = \cup_{T \in \mcT_h} T
\end{align}
Note that for all elements
$T \in \mcT_h$ there is a neighbor $T'\in \mcT_h$ such that $T$ and
$T'$ share a face. 
Finally, let
\begin{equation}
  V_h = \{ v \in C(\mcT_h) : v|_T \in  P_1(T) \}
\label{eq:Vh-def}
\end{equation}
be the space of continuous piecewise linear polynomials defined
on $\mcT_h$ and define the 
discrete counterpart of $H^1(\Gamma)/\RR$
by
\begin{align}
  V_{h,0} = \{ v \in V_h : \lambda_{\Gamma_h}(v) = 0 \}
\label{eq:Vh0-def}
\end{align}
consisting of those $v \in V_h$ with zero average $\lambda_{\Gamma_h}(v) =
\int_{\Gamma_h} v$. 

\subsection{The Full Gradient Stabilized Cut Finite Element Method}
As the discrete counterpart of the bilinear form $a(\cdot,\cdot)$
we consider, similar to~\cite{Reusken2014}, both a tangential and full gradient 
variant,
\begin{align}
  a_h^1(v,w) &= (\nablash v, \nablash w)_{\mcK_h}
  \label{eq:ahgamma-def}
  \\
  a_h^2(v,w) &= (\nabla v, \nabla w)_{\mcK_h}
  \label{eq:ah-def}
\end{align}
Defining the discrete linear form 
\begin{equation}
l_h(v) = ( f^e, v )_{\mcK_h}
\end{equation}
the full gradient stabilized cut finite element method for the
Laplace-Beltrami problem~\eqref{eq:LB}
takes the form: find  $u_h \in V_{h,0}$ such that for $i=1,2$
\begin{equation}
  A_h^i(u_h,v) = l_h(v) \quad \forall v \in V_{h,0}
\label{eq:weak-cutfem-formulation}
\end{equation}
with
\begin{equation}
A_h^i(v,w) = a_h^i(v,w) + \tau s_{h}(v,w) \quad \forall v,w \in V_h
  \label{eq:Ah-def}
\end{equation}
where $\tau$ is a positive parameter
and $s_h(\cdot,\cdot)$ is the full gradient stabilization defined by
\begin{align}
  s_h(v,w) &= h(\nabla v, \nabla w)_{\mcT_h}
  \label{eq:sh-def}
\end{align}
As the forthcoming
a priori error and condition number analysis 
of the first formulation will be covered by the analysis of the second,
we will from now focus on the latter one and omit the superscript $i$.
We introduce the stabilization norm
\begin{align}
  \| v \|_{s_h}^2 = s_h(v,v)
  \label{eq:sh-norm}
\end{align}
as well as the following energy norms
for $v \in H^1(\Gamma) + V_h^l$ and $w \in H^1(\Gamma)^e + V_h$
\begin{align}
  \| v \|_{a}^2 = a(v,v), \quad
  \| w \|_{a_h}^2 = a_h(w,w),  \quad
  \| w \|_{A_h}^2 = A_h(w,w) = \|w\|_{a_h}^2 + \|w\|_{s_h}^2
  \label{eq:energy-norms}
\end{align}
Clearly, the bilinear form~\eqref{eq:Ah-def}
is both coercive and continuous with respect to
$\|\cdot\|_{A_h}$:
\begin{align}
  \|v\|_{A_h}^2 &\lesssim A_h(v,v)
  \qquad 
  \\
  A_h(v,w) &\lesssim \| v \|_{A_h} \| w \|_{A_h}
  \label{eq:Ah-boundedness}
\end{align}

\section{Preliminaries}

\subsection{Trace Estimates and Inverse Inequalities}
First, we recall the following trace inequality for $v \in H^1(\mcT_h)$
\begin{equation}
  \| v \|_{\partial T} 
  \lesssim
  h^{-1/2} \|v \|_{T} +
  h^{1/2} \|\nabla v\|_{T}
  \quad \foralls T \in \mcT_h
  \label{eq:trace-inequality}
\end{equation}
while for the intersection $\Gamma \cap T$ the corresponding inequality
\begin{align}
  \| v \|_{\Gamma \cap T} 
  \lesssim
  h^{-1/2} \| v \|_{T} 
  + h^{1/2} \|\nabla v\|_{T} 
  \quad \foralls T \in \mcT_h
  \label{eq:trace-inequality-cut-faces}
\end{align}
holds whenever $h$ is small enough,
see \cite{HansboHansboLarson2003} for a proof.
In the following, we will also need some
well-known inverse estimates for $v_h \in V_h$:
\begin{gather}
  \label{eq:inverse-estimate-grad}
  \| \nabla v_h\|_{T} 
  \lesssim
  h^{-1} 
  \| v_h\|_{T} \quad \foralls T \in \mcT_h
  \\
  \| v_h\|_{\partial T} 
  \lesssim
  h^{-1/2} 
  \| v_h\|_{T},
  \qquad 
  \| \nabla v_h\|_{\partial T} 
  \lesssim
  h^{-1/2} 
  \| \nabla v_h\|_{T} \quad \foralls T \in \mcT_h
  \label{eq:inverse-estimates-boundary}
\end{gather}
and the following ``cut versions'' 
when $K \cap T \not \subseteq \partial T$
\begin{alignat}{5}
  \|v_h \|_{K \cap T} 
  &\lesssim
  h^{-1/2} \|v_h\|_{T},
  & & \qquad 
  \| \nabla v_h \|_{K \cap T} 
  &\lesssim
  h^{-1/2} \|\nabla v_h\|_{T}
  & &\quad \foralls K \in \mcK_h, \;
  \foralls T \in \mcT_h
  \label{eq:inverse-estimate-cut-v-on-K}
\end{alignat}
which are an immediate consequence of similar inverse estimates
presented in~\cite{HansboHansboLarson2003}.

\subsection{Geometric Estimates}
We now recall some standard geometric identities and estimates which
typically are used in numerical analysis of the discrete scheme when
passing from the discrete surface to the continuous one and vice versa.
For a detailed derivation, we refer to 
\citet{Dziuk1988,OlshanskiiReuskenGrande2009,DziukElliott2013}.
Starting with the Hessian of the signed distance function
\begin{align}
  \mcH = \nabla \otimes \nabla \rho \quad \text{on }
  U_{\delta_0}(\Gamma)
\end{align}
the derivative of the closest point projection 
and of an extended function $v^e$ is given by
\begin{gather}
Dp = \Ps (I - \rho \mcH) = \Ps - \rho \mcH
\label{eq:derivative-closest-point-projection}
\\
  Dv^e = D(v \circ p) = Dv Dp = Dv P_{\Gamma}(I - \rho \mcH)
\label{eq:derivative-extended-function}
\end{gather}
The self-adjointness of $\Ps$, $\Psh$, and $\mcH$,
and the fact that $ \Ps \mcH = \mcH = \mcH \Ps$
and $\Ps^2 = \Ps$
then leads to the identities
\begin{align}
  \nabla v^e &= \Ps(I - \rho \mcH) \nabla v
  = \Ps(I - \rho \mcH) \nablas v
  \label{eq:ve-full gradient}
  \\
  \nablash v^e &= \Psh(I - \rho \mcH)\Ps \nabla v = B^{T} \nablas v
  \label{eq:ve-tangential-gradient}
\end{align}
where the invertible linear mapping
\begin{align}
  B = P_{\Gamma}(I - \rho H) P_{\Gamma_h}: T_x(\Gammah) \to T_{p(x)}(\Gamma)
  \label{eq:B-def}
\end{align}
maps the tangential space of $\Gamma_h$ at $x$ to the tangential space of $\Gamma$ at
$p(x)$. Setting $v = w^l$ and using the identity $(w^l)^e = w$, we immediately get that
\begin{align}
  \nablas w^l = B^{-T} \nablash w
\end{align}
for any elementwise differentiable function $w$ on $\Gamma_h$ lifted to $\Gamma$.
We recall from \cite[Lemma 14.7]{GilbargTrudinger2001}
that for $x\in U_{\delta_0}(\Gamma)$, the Hessian $\mcH$
admits a representation
\begin{equation}\label{Hform}
  \mcH(x) = \sum_{i=1}^d \frac{\kappa_i^e}{1 + \rho(x)\kappa_i^e}a_i^e \otimes a_i^e
\end{equation}
where $\kappa_i$ are the principal curvatures with corresponding
principal curvature vectors $a_i$.
Thus
\begin{equation}
  \|\mcH\|_{L^\infty(U_{\delta_0}(\Gamma))} \lesssim 1
  \label{eq:Hesse-bound}
\end{equation}
for $\delta_0 > 0$ small enough and as a consequence
the following bounds for the linear
operator $B$ can be derived (see \cite{Dziuk1988,DziukElliott2013} for
the details):
\begin{lemma} It holds
  \label{lem:BBTbound}
  \begin{equation}
    \| B \|_{L^\infty(\Gamma_h)} \lesssim 1,
    \quad \| B^{-1} \|_{L^\infty(\Gamma)} \lesssim 1,
    \quad
    \| P_\Gamma - B B^T \|_{L^\infty(\Gamma)} \lesssim h^2
    \label{eq:BBTbound}
  \end{equation}
\end{lemma}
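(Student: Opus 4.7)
The plan is to establish the three bounds in the order stated, with the third estimate carrying most of the work and the second then following from it.

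For the first bound, I would argue directly from $B = \Ps(I-\rho\mcH)\Psh$ using submultiplicativity of the operator norm: the orthogonal projections $\Ps$ and $\Psh$ have pointwise operator norm one, the Hessian bound~\eqref{eq:Hesse-bound} gives $\|\mcH\|_{L^\infty(U_{\delta_0}(\Gamma))} \lesssim 1$, and the geometric approximation estimate in~\eqref{eq:geometric-assumptions-II} provides $\|\rho\|_{L^\infty(\Gamma_h)} \lesssim h^2$, so that $\|I-\rho\mcH\|_{L^\infty(\Gamma_h)} \leq 1 + Ch^2 \lesssim 1$.

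For the third bound, I would compute $BB^T$ explicitly using $B^T = \Psh(I-\rho\mcH)\Ps$, which follows from the self-adjointness of $\Ps$, $\Psh$, and $\mcH$. Expanding the product and grouping all cross terms containing at least one factor $\rho\mcH$ into a remainder $R$ yields $BB^T = \Ps\Psh\Ps + R$ with $\|R\|_{L^\infty(\Gamma)} \lesssim h^2$ by the same bounds used above. The principal part is then handled by the algebraic identity
\begin{equation*}
\Ps - \Ps\Psh\Ps = \Ps(I-\Psh)\Ps = \Ps(n_h\otimes n_h)\Ps = (\Ps n_h)\otimes(\Ps n_h).
\end{equation*}
A short computation using $\Ps n_h = n_h - (n^e\cdot n_h)\,n^e$ gives $|\Ps n_h|^2 = 1 - (n^e\cdot n_h)^2 \leq 2(1 - n^e\cdot n_h) = |n^e - n_h|^2 \lesssim h^2$, where the last step is the second estimate in~\eqref{eq:geometric-assumptions-II}. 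Combining these two contributions and applying the triangle inequality closes the bound.

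For the second bound, recall that $B$ is a linear isomorphism between the two-dimensional tangent spaces $T_x\Gamma_h$ and $T_{p(x)}\Gamma$. The third estimate shows that $BB^T$, viewed as an operator on $T_{p(x)}\Gamma$, differs from the identity $\Ps|_{T_{p(x)}\Gamma}$ in operator norm by at most $Ch^2$, so for $h$ small enough a Neumann series argument produces $\|(BB^T)^{-1}\|_{L^\infty(\Gamma)} \lesssim 1$. Since $\|B^{-1}\|^2 = \|(BB^T)^{-1}\|$ pointwise, the desired bound follows. The main obstacle in all of this is essentially bookkeeping: the operators live on different tangent planes at different points, connected by the closest-point projection $p$, so one must track carefully at which point each quantity is evaluated and over which set the $L^\infty$ norm is taken. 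The only nontrivial algebraic step is the identity $\Ps - \Ps\Psh\Ps = (\Ps n_h)\otimes(\Ps n_h)$, and once combined with the normal deviation estimate from~\eqref{eq:geometric-assumptions-II} the remaining work is routine.
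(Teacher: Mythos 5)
Your proof is correct. Note that the paper itself does not prove Lemma~\ref{lem:BBTbound} at all---it defers to \cite{Dziuk1988,DziukElliott2013}---and your argument is essentially the standard one found there: bound $\|B\|$ by submultiplicativity using $\|\rho\|_{L^\infty(\Gamma_h)}\lesssim h^2$ and $\|\mcH\|_{L^\infty}\lesssim 1$; expand $BB^T=\Ps\Psh\Ps+R$ with $\|R\|\lesssim h^2$ and reduce the principal part to $\Ps-\Ps\Psh\Ps=(\Ps n_h)\otimes(\Ps n_h)$ with $|\Ps n_h|^2=1-(n^e\cdot n_h)^2\leq |n^e-n_h|^2\lesssim h^2$; then invert $BB^T$ on the tangent plane by a Neumann series for $h$ small. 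The only point worth making explicit is that $\|B^{-1}\|^2=\|(BB^T)^{-1}\|$ because $B^{-1}B^{-T}=(B^TB)^{-1}$ and $B^TB$, $BB^T$ share the same nonzero spectrum; with that remark the argument is complete and self-contained.
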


In the course of the a priori analysis in Section~\ref{sec:a-priori-est}, 
we will need to quantify the error introduced by using the
full gradient in~\eqref{eq:ah-def} instead of $\nablash$.
To do so we decompose the full gradient as
$\nabla = \nablash + \Qsh \nabla$ 
with $\Qsh = I - \Psh = n_h \otimes n_h$.
We then have
\begin{lemma}
  For $v \in H^1(\Gamma)$ it holds
  \begin{align}
    \| \Qsh \nabla v^e \|_{\Gamma}
    \lesssim h 
    \| \nablas v \|_{\Gamma} 
    \label{eq:normal-grad-est}
  \end{align}
\end{lemma}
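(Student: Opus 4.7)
The plan is to exploit the pointwise identity for the extended gradient from \eqref{eq:ve-full gradient} and the fact that $n^e$ lies in the kernel of $\Ps$, so that $\Qsh \nabla v^e$ picks up only the small discrepancy between $n_h$ and $n^e$.

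First I would start from the identity
\begin{equation*}
  \nabla v^e = \Ps(I - \rho \mcH)\nablas v,
\end{equation*}
valid pointwise on the tubular neighborhood, and in particular on $\Gamma_h$. Applying $\Qsh = n_h \otimes n_h$ and using the self-adjointness of $\Ps$, one obtains
\begin{equation*}
  \Qsh \nabla v^e = \bigl( (\Ps n_h) \cdot (I - \rho \mcH)\nablas v \bigr)\, n_h.
\end{equation*}
The key observation is then that $\Ps n^e = 0$ on the tubular neighborhood, since $n^e(x) = n(p(x))$ is normal to $\Gamma$ at $p(x)$ and $\Ps$ projects onto the tangent plane at $p(x)$. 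Consequently $\Ps n_h = \Ps(n_h - n^e)$, and by the geometric assumption \eqref{eq:geometric-assumptions-II},
\begin{equation*}
  \|\Ps n_h\|_{L^\infty(\Gamma_h)} \leq \|n_h - n^e\|_{L^\infty(\Gamma_h)} \lesssim h.
\end{equation*}

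Next I would combine this with the pointwise bounds $\|\rho\|_{L^\infty(\Gamma_h)} \lesssim h^2$ and $\|\mcH\|_{L^\infty} \lesssim 1$ from \eqref{eq:Hesse-bound}, yielding $\|I - \rho\mcH\|_{L^\infty(\Gamma_h)} \lesssim 1$ (for $h$ small). A straightforward Cauchy--Schwarz in $\RR^d$ then gives the pointwise estimate
\begin{equation*}
  |\Qsh \nabla v^e|(x) \lesssim h\, |\nablas v|(p(x)) \quad \text{for } x \in \Gamma_h.
\end{equation*}

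Finally I would integrate this pointwise bound and pass from $\Gamma_h$ to $\Gamma$ using the lift: since the surface Jacobian $|B|$ associated to $p:\Gamma_h \to \Gamma$ is uniformly $\sim 1$ by Lemma \ref{lem:BBTbound}, the resulting $L^2$ norms on $\Gamma_h$ and on $\Gamma$ (of the lifted integrand) are equivalent up to constants, which produces the stated bound. The only mildly delicate point is the careful bookkeeping between evaluations on $\Gamma_h$ and on $\Gamma$ through the closest point map and the associated Jacobian; however, this is routine given the geometric preliminaries already established, so I do not expect a genuine obstacle.
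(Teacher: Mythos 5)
Your proposal is correct and follows essentially the same route as the paper: both start from the identity $\nabla v^e = \Ps(I-\rho\mcH)\nablas v$ and reduce the claim to the $L^\infty$ bound $\|\Qsh\Ps\|\lesssim h$, which in each case comes down to the normal approximation property $\|n^e - n_h\|_{L^\infty(\Gamma_h)}\lesssim h$ (you phrase it as $\Ps n_h = \Ps(n_h - n^e)$, the paper expands $\Qsh\Ps$ directly, but the estimate is the same). Your explicit handling of the $\Gamma_h$-versus-$\Gamma$ bookkeeping via $|B|$ is a welcome clarification but not a different argument.
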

\begin{proof}
  Since $\nabla v^e$ = $ \Ps(I - \rho \mcH)\nablas v$
  according to identity~\eqref{eq:ve-full gradient}, it is enough to
  prove that
  \begin{align}
    \| \Qsh \Ps \|_{L^{\infty}(\Gamma)} \lesssim h
    \label{eq:normal-tangential-est}
  \end{align}
  But a simple computation shows that
\begin{align}
  \| \Qsh \Ps \|_{L^{\infty}(\Gamma)}
  &= 
  \| n_h \otimes n_h - (n_h,n)_{\RR^d} n_h \otimes n \|_{L^{\infty}(\Gamma)}
  \\
  &= 
  \|(1 - (n_h,n)_{\RR^d}) n_h \otimes n_h \|_{L^{\infty}(\Gamma)} + \| (n_h,n)_{\RR^d} n_h
  \otimes (n_h - n) \|_{L^{\infty}(\Gamma)}
  \\
  &\lesssim h^2 + h
\end{align}
\end{proof}
Next, 
for a subset $\omega\subset \Gammah$,
we have the change of variables formula
\begin{equation}
\int_{\omega^l} g^l d\Gamma 
=  \int_{\omega} g |B|d\Gamma_h
\end{equation}
with $|B|$ denoting the absolute value of the determinant 
of $B$.  The determinant $|B|$ satisfies the following estimates.
\begin{lemma}  It holds
  \label{lem:detBbounds}
  \begin{alignat}{5}
  \| 1 - |B| \|_{L^\infty(\mcK_h)} 
  &\lesssim h^2, 
  & &\qquad
  \||B|\|_{L^\infty(\mcK_h)} 
  &\lesssim 1, 
  & &\qquad
  \||B|^{-1}\|_{L^\infty(\mcK_h)} 
  &\lesssim 1
    \label{eq:detBbound}
\end{alignat}
\end{lemma}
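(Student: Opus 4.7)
The plan is to derive all three bounds directly from the third estimate of Lemma~\ref{lem:BBTbound}, namely $\|P_\Gamma - BB^T\|_{L^\infty(\Gamma)} \lesssim h^2$. The crucial observation is that $|B|$ is the determinant of the linear map $B: T_x(\Gamma_h) \to T_{p(x)}(\Gamma)$ between two $(d-1)$-dimensional subspaces, so it can be recovered from the Gram matrix via $|B|^2 = \det\bigl(BB^T\big|_{T_{p(x)}(\Gamma)}\bigr)$, where $BB^T$ is viewed as an endomorphism of the $(d-1)$-dimensional tangent plane $T_{p(x)}(\Gamma)$ (this is consistent: $B$ vanishes on $T_x(\Gamma_h)^\perp = \mathrm{span}(n_h)$ and its image lies in $T_{p(x)}(\Gamma)$, so $BB^T$ vanishes on $\mathrm{span}(n)$ and preserves $T_{p(x)}(\Gamma)$).

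First I would fix $x\in\mcK_h$ and choose an orthonormal frame $\{t_1,\ldots,t_{d-1}\}$ of $T_{p(x)}(\Gamma)$. Writing $M := BB^T|_{T_{p(x)}(\Gamma)}$ in this frame gives a $(d-1)\times(d-1)$ matrix with entries $M_{ij} = t_i^T(BB^T)t_j$. Since $P_\Gamma$ acts as the identity on $T_{p(x)}(\Gamma)$, one has $t_i^T P_\Gamma t_j = \delta_{ij}$, and \eqref{eq:BBTbound} yields $|M_{ij} - \delta_{ij}| \lesssim h^2$ uniformly in $x$. Hence $M = I_{d-1} + E$ with $\|E\|_{L^\infty(\mcK_h)} \lesssim h^2$.

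Next, because the determinant is a smooth function of the matrix entries, a direct expansion (or the identity $\det(I+E)=1+\mathrm{tr}(E)+O(\|E\|^2)$) gives $\det M = 1 + O(h^2)$. Taking positive square roots, $|B| = \sqrt{\det M} = 1 + O(h^2)$, which is precisely the first bound $\|1-|B|\|_{L^\infty(\mcK_h)} \lesssim h^2$. Choosing $h_0$ small enough so that $|1-|B|| \leq 1/2$ uniformly, the remaining two bounds $\||B|\|_{L^\infty(\mcK_h)} \lesssim 1$ and $\||B|^{-1}\|_{L^\infty(\mcK_h)} \lesssim 1$ follow by elementary scalar arithmetic.

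The only non-cosmetic step is the first one: interpreting $|B|$ correctly as a Jacobian determinant restricted to the tangent plane, so that the perturbation estimate on $BB^T$ can be converted into a perturbation estimate on the determinant. Everything else is routine linear algebra.
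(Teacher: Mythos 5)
Your proof is correct, and it is worth noting that the paper itself offers no argument for this lemma at all: it is stated as a known fact, with the surrounding geometric estimates deferred to \citet{Dziuk1988} and \cite{DziukElliott2013}. The classical route in those references is to write the ratio of surface measures explicitly as $|B| = (n\cdot n_h)\prod_{i=1}^{d-1}(1-\rho\,\kappa_i^e)$ (or an equivalent formula obtained by parametrizing $\Gamma$ over $\Gamma_h$ via $p$) and then to read off $\|1-|B|\|_{L^\infty} \lesssim h^2$ from $\|\rho\|_{L^\infty(\Gamma_h)}\lesssim h^2$, $\|\mcH\|_{L^\infty}\lesssim 1$ and $1-n\cdot n_h = \tfrac12\|n-n_h\|^2 \lesssim h^2$, i.e.\ directly from \eqref{eq:geometric-assumptions-II} and \eqref{eq:Hesse-bound}. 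Your argument instead takes the third estimate of Lemma~\ref{lem:BBTbound} as the starting point and recovers $|B|$ through the Gram identity $|B|^2 = \det\bigl(BB^T|_{T_{p(x)}(\Gamma)}\bigr)$; the key verifications --- that $B^T$ maps $T_{p(x)}(\Gamma)$ into $T_x(\Gamma_h)$ so the restriction of $BB^T$ is a genuine endomorphism whose matrix in an orthonormal frame is $\tilde B\tilde B^T$, and that $P_\Gamma$ is the identity on that frame so $M = I_{d-1}+E$ with $\|E\|\lesssim h^2$ --- are all present and sound, and the passage $\det(I+E) = 1+O(h^2)$, $\sqrt{1+O(h^2)} = 1+O(h^2)$ for $h\le h_0$ is routine. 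What your route buys is economy: it reuses an operator bound the paper has already recorded rather than redoing the curvature computation, and it avoids introducing the principal-curvature factorization a second time. What it costs is a mild dependence on Lemma~\ref{lem:BBTbound} being available first (which is how the paper orders the material anyway, so there is no circularity). The only cosmetic blemish is the $L^\infty(\Gamma)$ versus $L^\infty(\mcK_h)$ bookkeeping, which is harmless since $p$ is a bijection between $\Gamma_h$ and $\Gamma$; you handle this implicitly and correctly.
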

\noindent Combining the various estimates for the norm and the determinant of $B$ shows
that for $m = 0,1$
\begin{alignat}{3}
  \| v \|_{H^{m}(\mcK_h^l)} &\sim \| v^e \|_{H^{m}(\mcK_h)}  
  & &\quad \text{for } v \in H^m(\mcK_h^l)
  \label{eq:norm-equivalences-ve}
  \\
  \| w^l \|_{H^{m}(\mcK_h^l)} &\sim \| w \|_{H^{m}(\mcK_h)}
  & &\quad \text{for } w \in V_h
  \label{eq:norm-equivalences-wh}
\end{alignat}

Next, we observe that thanks to the coarea-formula (cf. \citet{EvansGariepy1992})
\begin{align*}
\int_{U_{\delta}} f(x) \,dx = \int_{-\delta}^{\delta} 
\left(\int_{\Gamma(r)} f(y,r) \, \mathrm{d} \Gamma_r(y)\right)\,\mathrm{d}r
\end{align*}
the extension operator $v^e$ defines a bounded operator
$H^m(\Gamma) \ni v \mapsto v^e \in H^m(U_{\delta}(\Gamma))$
satisfying the stability estimate
\begin{align}
  \| v^e \|_{k,U_{\delta}(\Gamma)} \lesssim \delta^{1/2} \| v
  \|_{k,\Gamma}, \qquad 0 \leqslant k \leqslant m
\label{eq:stability-estimate-for-extension}
\end{align}
for $0 < \delta \leqslant \delta_0$ where the hidden constant depends only on the curvature of $\Gamma$.

\subsection{Interpolation Operator}
Next, we recall from~\citet{ScottZhang1990} that for
$v \in H^m(\mcT_h)$,
the standard Scott-Zhang interpolant $\pi_h:L^2(\mcT_h) \rightarrow V_h$ satisfies the
local interpolation estimates
\begin{alignat}{3}
\| v - \pi_h v \|_{k,T} 
& \lesssim
  h^{l-k}| v |_{l,\omega(T)},
  & &\quad 0\leqslant k \leqslant l \leqslant \min\{2,m\} \quad &\foralls T\in \mcT_h
  \label{eq:interpest0}
  \\
\| v - \pi_h v \|_{l,F} &\lesssim h^{l-k-1/2}| v |_{l,\omega(F)},
  & &\quad 0\leqslant k \leqslant l - 1/2 \leqslant \min\{2,m\} - 1/2  \quad &\foralls F\in
  \mcF_{h} 
  \label{eq:interpest1}
\end{alignat}
where $\omega(T)$ consists of all elements sharing a
vertex with $T$. The patch $\omega(F)$ is defined analogously.
Now with the help of the extension operator,
an interpolation operator $\pi_h: H^m(\Gamma) \to V_h$ 
can be constructed by
setting $\pi_h v = \pi_h v^e$, where we took the liberty of using the same symbol.
Choosing $\delta_0 \sim h$,
it follows directly from combining the trace inequality~\eqref{eq:trace-inequality-cut-faces}, the interpolation
estimate~\eqref{eq:interpest1}, and the stability
estimate~\eqref{eq:stability-estimate-for-extension} that
the interpolation operator satisfies the following error estimate:
\begin{lemma}
\label{lem:interpolenergy}
For $v \in H^2(\Gamma)$, it holds that
\begin{align}
\label{eq:interpolenergy}
h \| v^e - \pi_h v^e \|_{\Gammah} 
+ \| v^e - \pi_h v^e \|_{a_h} &\lesssim  h \| v \|_{2,\Gamma}
\end{align}
\end{lemma}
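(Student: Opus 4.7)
\medskip

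\noindent\textbf{Proof plan.} The statement of the lemma essentially announces the ingredients: the cut trace inequality~\eqref{eq:trace-inequality-cut-faces}, the Scott–Zhang interpolation estimate~\eqref{eq:interpest0}, and the extension stability bound~\eqref{eq:stability-estimate-for-extension}. My plan is to treat the two terms on the left-hand side separately, in each case passing from $\Gamma_h$ to the bulk mesh elementwise, applying Scott-Zhang, summing, and finally trading $v^e$ on $\mcT_h$ for $v$ on $\Gamma$ via the extension stability with the choice $\delta_0 \sim h$.

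Fix $K \in \mcK_h$ with $K = \Gamma_h \cap T$ for some $T \in \mcT_h$, and set $w = v^e - \pi_h v^e$. For the $L^2$ term, the cut trace inequality gives
\begin{equation*}
  \|w\|_{K}^2 \lesssim h^{-1}\|w\|_{T}^2 + h\,\|\nabla w\|_{T}^2,
\end{equation*}
and the Scott–Zhang bound~\eqref{eq:interpest0} (with $k=0,1$ and $l=2$) yields $\|w\|_T \lesssim h^2|v^e|_{2,\omega(T)}$ and $\|\nabla w\|_T \lesssim h\,|v^e|_{2,\omega(T)}$, so $\|w\|_K^2 \lesssim h^3|v^e|_{2,\omega(T)}^2$. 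For the $a_h$ term, I apply the same cut trace inequality to $\nabla w$ instead; the crucial observation is that $\pi_h v^e \in P_1(T)$, so $\nabla^2 \pi_h v^e \equiv 0$ on $T$ and
\begin{equation*}
  \|\nabla w\|_{K}^2 \lesssim h^{-1}\|\nabla w\|_{T}^2 + h\,\|\nabla^2 v^e\|_{T}^2
  \lesssim h\,|v^e|_{2,\omega(T)}^2 + h\,|v^e|_{2,T}^2
  \lesssim h\,|v^e|_{2,\omega(T)}^2,
\end{equation*}
where the Scott–Zhang estimate is used once more on the first contribution.

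Summing the two elementwise bounds over $K \in \mcK_h$ and using shape-regularity to absorb the finite overlap of the Scott–Zhang patches $\omega(T)$, I obtain
\begin{equation*}
  \|w\|_{\Gamma_h}^2 \lesssim h^3\,|v^e|_{2,\mcT_h}^2,
  \qquad
  \|w\|_{a_h}^2 \lesssim h\,|v^e|_{2,\mcT_h}^2.
\end{equation*}
Because $\mcT_h \subset U_{\delta}(\Gamma)$ with $\delta \lesssim h$ (the active mesh is an $h$-tubular neighborhood), the extension stability estimate~\eqref{eq:stability-estimate-for-extension} with $k = 2$ gives $|v^e|_{2,\mcT_h}^2 \lesssim h\,\|v\|_{2,\Gamma}^2$. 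Inserting this yields $\|w\|_{\Gamma_h} \lesssim h^2 \|v\|_{2,\Gamma}$ and $\|w\|_{a_h} \lesssim h \|v\|_{2,\Gamma}$, from which~\eqref{eq:interpolenergy} follows.

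The only place the argument requires genuine care is the $a_h$ term: naively applying the cut trace inequality to $\nabla w$ would produce a term $\|\nabla^2 w\|_T$ that is not controlled by Scott–Zhang with $l=2$. The fix is the $P_1$-annihilation $\nabla^2 \pi_h v^e = 0$, which reduces the second-derivative term to $\|\nabla^2 v^e\|_T$ and lets the $h^{1/2}$ from the trace combine with the $h^{1/2}$ from the extension stability to produce the required full power of $h$.
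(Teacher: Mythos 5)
Your proof is correct and follows exactly the route the paper prescribes (the paper gives no written proof, only the one-line recipe of combining the cut trace inequality~\eqref{eq:trace-inequality-cut-faces}, the Scott--Zhang estimates, and the extension stability~\eqref{eq:stability-estimate-for-extension} with $\delta \sim h$); your elementwise passage from $K = \Gamma_h \cap T$ to $T$, the $P_1$-annihilation of $\nabla^2 \pi_h v^e$ (equivalently, the $k=l=2$ stability case of~\eqref{eq:interpest0}), and the final $\delta^{1/2} \sim h^{1/2}$ gain from the extension are precisely the intended ingredients. You in fact prove the slightly stronger bound $\| v^e - \pi_h v^e \|_{\Gamma_h} \lesssim h^2 \| v \|_{2,\Gamma}$, which covers the stated estimate.
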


\subsection{Fat Intersection Covering}
\label{ssec:fat-intersection-covering}
Since the surface geometry is represented on fixed background mesh, the active  mesh $\mcT_h$
might contain elements which barely intersects the discretized surface $\Gamma_h$.
Such ``small cut elements'' typically prohibit the application of a whole set of
well-known estimates, such as interpolation estimates and inverse inequalities,
which rely on certain scaling properties.
As a partial replacement for the lost scaling properties we here recall from~\citet{BurmanHansboLarson2015} 
the concept of \emph{fat intersection coverings} of $\mcT_h$.

In \citet{BurmanHansboLarson2015} it was proved that
the active  mesh fulfills a fat intersection property which
roughly states that for every element in $\mcT_h$ there is a 
close-by element which has a significant intersection with $\Gamma_h$.
More precisely, let $x$ be a point on $\Gamma$ and let 
  $B_{\delta}(x) = \{y\in \RR^d: |x-y| < \delta\}$ 
  and 
  $D_{\delta} = B_{\delta}(x) \cap \Gamma$. 
  We define the sets of elements
  \begin{align}
    \mcK_{\delta,x} 
    = \{ K \in \mcK_h : \overline{K}^l \cap D_{\delta}(x) \neq
    \emptyset \},
    \qquad
    \mcT_{\delta,x} 
    = \{ T \in \mcT_h : T \cap \Gamma_h \in \mcK_{\delta,x} \}
    \label{eq:fat-intersection-covering}
  \end{align}
 With $\delta \sim h$ we use the notation $\mcK_{h,x}$ and
$\mcT_{h,x}$. For each $\mcT_h$, $h \in (0,h_0]$ there is a set of
points $\mcX_h$ on $\Gamma$ such that $\{\mcK_{h,x}, x \in \mcX_h \}$
and $\{\mcT_{h,x}, x \in \mcX_h \}$ are coverings of $\mcT_h$ and
$\mcK_h$ with the following properties:
\begin{itemize}
  \item The number of set containing a given point $y$ is uniformly
    bounded
    \begin{align}
      \# \{ x \in \mcX_h : y \in \mcT_{h,x}  \} \lesssim 1 \quad
      \foralls y \in \RR^d
    \end{align}
  for all $h \in (0,h_0]$ with $h_0$ small enough.
\item
  The number of elements in the sets $\mcT_{h,x}$ is uniformly bounded
  \begin{align}
    \# \mcT_{h,x} \lesssim 1 
    \quad \foralls x \in \mcX_h
  \end{align}
  for all $h \in (0,h_0]$ with $h_0$ small enough, and each element in
  $\mcT_{h,x}$ shares at least one face with another element in
  $\mcT_{h,x}$.
\item $\foralls h \in (0,h_0]$ with $h_0$ small enough, and $\foralls x \in \mcX_h$, $\exists
  T_x \in \mcT_{h,x}$ that has a large (fat) intersection with
  $\Gamma_h$ in the sense that
  \begin{align}
  | T_x | \sim h | T_x \cap \Gamma_h | = h | K_x | 
  \quad \foralls x \in \mcX_h
  \end{align}
\end{itemize}
To make use of the fat intersection property in the next section,
we will need the following Lemma~\ref{lem:l2norm-control-via-jumps}
which describes how the control of discrete 
functions on potentially small cut elements can be transferred to
their close-by neighbors with large intersection by using a face-based stabilization term.
A proof of the first estimate can be found in~\citet{MassingLarsonLoggEtAl2013}.
\begin{lemma}
  \label{lem:l2norm-control-via-jumps}
  Let $v \in V_h$ 
  and consider a macro-element $\mathcal{M} = T_1 \cup T_2$ formed by
  any two elements $T_1$ and $T_2$ of $\mcT_h$ sharing a face $F$. 
  Then
  \begin{align}
    \| v \|_{T_1}^2 &\lesssim  \|v\|_{T_2}^2
    + h^3 \| n_F \cdot \jump{\nabla v} \|_F^2 
    \label{eq:l2norm-control-via-jumps-I}
  \end{align}
  with the hidden constant only depending on the quasi-uniformness parameter.
\end{lemma}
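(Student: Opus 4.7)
The plan is to exploit the fact that $v|_{T_1}$ and $v|_{T_2}$ are both affine (since $v \in V_h$ is piecewise $P_1$) and agree on the shared face $F$, so their difference is controlled entirely by the jump of the normal derivative across $F$. First I would introduce the canonical polynomial extension $\widetilde{v}_2 \in P_1(\RR^d)$ of $v|_{T_2}$ to all of $\RR^d$ and split
\begin{equation*}
  \| v \|_{T_1}^2 \leqslant 2\| \widetilde{v}_2 \|_{T_1}^2 + 2\| v - \widetilde{v}_2 \|_{T_1}^2.
\end{equation*}
The first term is bounded by $\| v \|_{T_2}^2$ up to a constant depending only on the shape regularity and quasi-uniformness of the mesh: this is a standard norm equivalence for polynomials on the macro-element $T_1 \cup T_2$, essentially an inverse-type argument valid because $T_1$ and $T_2$ are shape-regular simplices of comparable diameter sharing the face $F$.

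For the second term, observe that $w := (v - \widetilde{v}_2)|_{T_1}$ is affine and vanishes on the hyperplane $H_F$ containing $F$, since $v$ is continuous across $F$ and $\widetilde{v}_2$ coincides with $v$ on $T_2$, hence on $F$. Any affine function vanishing on $H_F$ has gradient parallel to $n_F$, and identifying this gradient on $T_1$ yields $\nabla w = (n_F \cdot \jump{\nabla v})\, n_F$. Thus $w(x) = (n_F \cdot \jump{\nabla v})\, d_F(x)$, where $d_F$ is the signed distance to $H_F$. Since $|d_F| \lesssim h$ on $T_1$ and $|T_1| \sim h^d$, this gives
\begin{equation*}
  \| w \|_{T_1}^2 \lesssim h^{d+2} (n_F \cdot \jump{\nabla v})^2.
\end{equation*}

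To close the argument, I would rewrite the right-hand side in terms of the face norm: since $n_F \cdot \jump{\nabla v}$ is constant on $F$ and $|F| \sim h^{d-1}$, we have $(n_F \cdot \jump{\nabla v})^2 \sim h^{-(d-1)} \| n_F \cdot \jump{\nabla v} \|_F^2$, so the $h^{d+2}$ factor becomes precisely $h^3 \| n_F \cdot \jump{\nabla v} \|_F^2$. Combining with the bound on $\| \widetilde{v}_2 \|_{T_1}^2$ yields the claim. The only delicate step, and the one I would verify with care, is the inverse-type bound $\| \widetilde{v}_2 \|_{T_1} \lesssim \| v \|_{T_2}$; the rest is elementary scaling once the structural observation that $v|_{T_1} - \widetilde{v}_2|_{T_1}$ is a multiple of $d_F$ has been made.
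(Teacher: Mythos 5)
Your argument is correct, and it is essentially the standard proof: the paper itself does not prove Lemma~\ref{lem:l2norm-control-via-jumps} but defers to the cited reference, where the same strategy is used --- extend $v|_{T_2}$ as an affine polynomial to the macro-element, observe that the difference on $T_1$ is the constant $n_F\cdot\jump{\nabla v}$ times the signed distance to the plane of $F$, and conclude by scaling ($h^{d+2}$ from the element integral against $h^{d-1}$ from $|F|$ giving the dimension-independent $h^3$). The one step you flag as delicate, $\|\widetilde v_2\|_{T_1}\lesssim\|v\|_{T_2}$, is indeed the standard equivalence of norms for fixed-degree polynomials on shape-regular elements of comparable size, with constant controlled by the quasi-uniformity parameter, so no gap remains.
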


\section{Stability and Condition Number estimates}

\subsection{Discrete Poincar\'e Estimates}
\label{ssec:poincare-estimates}

First we recall that $v \in V_h$ satisfies a Poincar\'e inequality
on the surface (see \cite[Lemma 4.1]{BurmanHansboLarson2015}):
\begin{lemma}
  \label{lem:poincare-I} For $v \in V_h$, the following estimate holds 
  \begin{equation}
  \| v - \lambda_{\Gamma_h}(v) \|_{\Gamma_h}
  \lesssim
    \| \nablash v \|_{\Gamma_h} 
  \label{eq:poincare-I}
\end{equation}
  for $0<h \leq h_0$ with $h_0$ small enough.
\end{lemma}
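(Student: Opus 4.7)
The plan is to transfer the claim to the continuous surface via lifting, apply the standard continuous Poincaré inequality on $\Gamma$, and then transfer back using the geometric norm equivalences already collected in Section~3.2.

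More concretely, let $v \in V_h$ and denote by $v^l$ the lift of $v|_{\Gamma_h}$ to $\Gamma$. Since $\lambda_{\Gamma_h}(v)$ is by definition the $L^2(\Gamma_h)$-orthogonal projection of $v$ onto the constants, it minimizes $\|v - c\|_{\Gamma_h}$ over $c \in \RR$, so in particular
\begin{equation*}
  \| v - \lambda_{\Gamma_h}(v) \|_{\Gamma_h}
  \leqslant \| v - \lambda_\Gamma(v^l) \|_{\Gamma_h},
\end{equation*}
where $\lambda_\Gamma(v^l) = |\Gamma|^{-1}\int_\Gamma v^l\,\ds$. Because a constant lifts to a constant and because of the norm equivalence~\eqref{eq:norm-equivalences-wh} applied to $v - \lambda_\Gamma(v^l) \in V_h$ on $\mcK_h$, the right-hand side is equivalent to $\| v^l - \lambda_\Gamma(v^l) \|_\Gamma$.

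Next, I would invoke the standard continuous Poincaré inequality on the smooth compact surface $\Gamma$ applied to the $H^1(\Gamma)$ function $v^l$ (piecewise smooth with matching traces, hence in $H^1(\Gamma)$) to obtain
\begin{equation*}
  \| v^l - \lambda_\Gamma(v^l) \|_\Gamma \lesssim \| \nablas v^l \|_\Gamma.
\end{equation*}
Finally, using the identity $\nablas w^l = B^{-T}\nablash w$ together with the uniform bound $\|B^{-1}\|_{L^\infty(\Gamma)}\lesssim 1$ from Lemma~\ref{lem:BBTbound} and the change of variables formula with $\| |B| \|_{L^\infty(\mcK_h)} \lesssim 1$ from Lemma~\ref{lem:detBbounds}, one gets $\| \nablas v^l \|_\Gamma \lesssim \| \nablash v \|_{\Gamma_h}$. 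Chaining the three estimates yields the desired inequality.

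The only delicate point is the choice of the intermediate constant: subtracting $\lambda_{\Gamma_h}(v)$ from $v$ does not lift cleanly to $\Gamma$ because $\lambda_{\Gamma_h}(v) \neq \lambda_\Gamma(v^l)$ in general (the two averages differ by $O(h^2)$ via $|B|-1$), so the use of the minimizing property of $\lambda_{\Gamma_h}(v)$ to replace it by $\lambda_\Gamma(v^l)$ is the step that makes the lifting argument go through cleanly. Everything else is an assembly of facts from Section~3.2, so I expect no further obstacle, and the condition $0 < h \leqslant h_0$ with $h_0$ small enough is only needed to guarantee that the closest point projection is bijective between $\Gamma_h$ and $\Gamma$ and that the $B$-estimates hold.
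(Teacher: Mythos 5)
Your argument is correct, and it is worth noting that the paper does not supply its own proof of this lemma but simply refers to \cite[Lemma 4.1]{BurmanHansboLarson2015}. Your route --- the minimizing property of the mean to swap $\lambda_{\Gamma_h}(v)$ for $\lambda_\Gamma(v^l)$, the norm equivalence \eqref{eq:norm-equivalences-wh} via $|B|$, the continuous Poincar\'e inequality on the fixed smooth surface $\Gamma$ applied to $v^l\in H^1(\Gamma)$, and the gradient transfer $\nablas v^l = B^{-T}\nablash v$ with Lemmas~\ref{lem:BBTbound} and~\ref{lem:detBbounds} --- is precisely the standard lift-and-pull-back argument used in that reference, so there is nothing substantive to contrast.
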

Next, we derive an additional Poincar\'e inequality
which involves a scaled version of the $L^2$ norm
of discrete finite element functions
on the active  mesh.
\begin{lemma}
  \label{lem:discrete-poincare-Nh} For $v \in V_h$, the following estimate holds 
  \begin{equation}
    h^{-1}\| v - \lambda_{\Gamma_h}(v) \|^2_{\mcT_h}
    \lesssim
    \| \nablash v \|_{\Gamma_h}^2 
    + h \| \nabla v \|_{\mcT_h}^2 
  \label{eq:discrete-poincare-Nh}
\end{equation}
  for $0<h \leq h_0$ with $h_0$ small enough.
\end{lemma}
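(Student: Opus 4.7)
The plan is to combine the fat intersection covering from Section~3.4 with the surface Poincar\'e inequality of Lemma~\ref{lem:poincare-I}. Without loss of generality we may assume $\lambda_{\Gamma_h}(v) = 0$, since subtracting a constant changes neither $\nabla v$ nor $\nablash v$. The strategy is to bound $\|v\|_{\mcT_h}^2$ in terms of $\|v\|_{\Gamma_h}^2$ plus a controlled remainder by pulling the bulk $L^2$ mass of $v$ on each patch back to the fat element, and then to close the argument with Lemma~\ref{lem:poincare-I}.

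First, for each $x \in \mcX_h$ let $T_x$ be the fat element with $|T_x| \sim h|K_x|$. Since $v|_{T_x}$ is a linear polynomial and $K_x$ is a fat intersection, a scaled norm equivalence on polynomials gives
\begin{equation*}
\|v\|_{T_x}^2 \lesssim h \|v\|_{K_x}^2.
\end{equation*}
For any other element $T \in \mcT_{h,x}$, the macro-element chain in $\mcT_{h,x}$ has uniformly bounded length, and each face-hop is controlled by Lemma~\ref{lem:l2norm-control-via-jumps}. Applying the inverse trace estimate \eqref{eq:inverse-estimates-boundary} to the jump term gives
\begin{equation*}
h^3 \|n_F \cdot \jump{\nabla v}\|_F^2 \lesssim h^2 \|\nabla v\|_{T^+ \cup T^-}^2,
\end{equation*}
so iterating Lemma~\ref{lem:l2norm-control-via-jumps} along the chain yields
\begin{equation*}
\|v\|_T^2 \lesssim h \|v\|_{K_x}^2 + h^2 \|\nabla v\|_{\mcT_{h,x}}^2
\quad \forall\, T \in \mcT_{h,x}.
\end{equation*}

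Summing over $T \in \mcT_{h,x}$, then over $x \in \mcX_h$, and exploiting the uniform finite-overlap properties of the covering produces
\begin{equation*}
\|v\|_{\mcT_h}^2 \lesssim h \|v\|_{\Gamma_h}^2 + h^2 \|\nabla v\|_{\mcT_h}^2.
\end{equation*}
Dividing by $h$ and applying Lemma~\ref{lem:poincare-I} to bound $\|v\|_{\Gamma_h}^2 \lesssim \|\nablash v\|_{\Gamma_h}^2$ (valid because $\lambda_{\Gamma_h}(v)=0$) gives the claimed estimate.

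The main obstacle is the first step: transferring $L^2$ mass from the surface cut $K_x$ back onto the \emph{volume} of the fat element $T_x$ with the correct scaling factor $h$. This requires using the fat intersection property $|T_x| \sim h |K_x|$ together with an inverse-type polynomial norm equivalence on $T_x$; once this scaling is established, the face-chain bookkeeping through Lemma~\ref{lem:l2norm-control-via-jumps} is routine because the macro-element chains in $\mcT_{h,x}$ have uniformly bounded length and the covering has finite overlap.
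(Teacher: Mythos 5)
There is a genuine gap in the very step you identify as the crux. The claimed transfer
\begin{equation*}
\|v\|_{T_x}^2 \lesssim h \|v\|_{K_x}^2
\end{equation*}
is false for general linear polynomials, even when $T_x$ has a fat intersection with $\Gamma_h$. Since $\Gamma_h$ is piecewise planar, $K_x = T_x \cap \Gamma_h$ lies in a hyperplane, and any linear $v$ vanishing on that hyperplane satisfies $\|v\|_{K_x} = 0$ while $\|v\|_{T_x} > 0$; no constant can make the inequality hold. The fat intersection property $|T_x| \sim h |K_x|$ is a statement about measures only — it lets you move the $L^2$ norm of a \emph{constant} between $T_x$ and $K_x$, but it cannot control a polynomial whose restriction to the cut plane loses information. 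There is no ``scaled norm equivalence on polynomials'' of the kind you invoke, precisely because restriction to a lower-dimensional set is not injective on $P_1(T_x)$.

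The paper's proof repairs exactly this point by inserting the elementwise average $\overline{v}|_T = \tfrac{1}{|T|}\int_T v \dx$. For the constant $\overline{v}$ the transfer is legitimate, $\|\overline{v}\|_{T_x}^2 = |T_x|\,|\overline{v}|^2 \sim h\,|K_x|\,|\overline{v}|^2 = h \|\overline{v}\|_{K_x}^2$, while the deviation is controlled by the gradient, $\|v - \overline{v}\|_{T_x} \lesssim h \|\nabla v\|_{T_x}$ and (via the trace inequality \eqref{eq:trace-inequality-cut-faces}) $h\|v - \overline{v}\|_{\Gamma_h}^2 \lesssim h^2 \|\nabla v\|_{\mcT_h}^2$; both error terms are absorbed into $h\|\nabla v\|_{\mcT_h}^2$ after division by $h$. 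This is why the right-hand side of \eqref{eq:discrete-poincare-Nh} must contain the bulk gradient term at all — your route, if it worked, would essentially prove the stronger estimate $h^{-1}\|v\|_{\mcT_h}^2 \lesssim \|\nablash v\|_{\Gamma_h}^2 + h^2\|\nabla v\|_{\mcT_h}^2$ with the gradient term entering only through the face-chain bookkeeping, which is a warning sign. The remainder of your argument (chaining through $\mcT_{h,x}$ with Lemma~\ref{lem:l2norm-control-via-jumps}, bounding the jump terms by \eqref{eq:inverse-estimates-boundary}, finite overlap of the covering, and the final application of Lemma~\ref{lem:poincare-I}) coincides with the paper and is sound.
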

\begin{proof}
  Without loss of generality we can assume that $\lambda_{\Gamma_h}(v) = 0$.
  Apply~\eqref{eq:l2norm-control-via-jumps-I} and
  \eqref{eq:inverse-estimates-boundary} to obtain
  \begin{align}
    \| v \|_{\mcT_h}^2 
    &\lesssim \sum_{x \in \mcX_h} \| v \|_{\mcT_{h,x}}^2
    \lesssim
    \sum_{x \in \mcX_h} \| v \|_{T_x}^2 
    + h^3 \| n_F \cdot \jump{\nabla v} \|_{\mcF_h}^2
    \lesssim
    \sum_{x \in \mcX_h} \| v \|_{T_x}^2 
    + h^2 \| \nabla v \|_{\mcT_h}^2
    \label{eq:poincare-proof-I}
\end{align}
Thus it is sufficient to estimate the first term
in~\eqref{eq:poincare-proof-I}.
For $v \in V_h$, we define a piecewise constant version satisfying
$\overline{v}|_T = \tfrac{1}{|T|} \int_T v \dx$. Clearly 
$\|v - \overline{v}\|_T \lesssim h \| \nabla v \|_T$.
Adding and subtracting $\overline{v}$ gives
\begin{align}
  \sum_{x \in \mcX_h} \| v \|_{T_x}^2
  &\lesssim
  \sum_{x \in \mcX_h}  \| v - \overline{v} \|_{T_x}^2
  +
  \sum_{x \in \mcX_h}  \| \overline{v} \|_{T_x}^2
  \\
  &\lesssim
  h^2 \| \nabla v \|_{\mcT_h}^2
  + \sum_{x \in \mcX_h} h \| \overline{v} \|_{K_x}^2
  \\
  &\lesssim
  h^2 \| \nabla v \|_{\mcT_h}^2
  + h \| v \|_{\Gamma_h}^2
  + 
  \underbrace{h \| v - \overline{v} \|_{\Gamma_h}^2}_{\lesssim h^2 \|
  \nabla v\|_{\mcT_h}^2}
  \\
  &\lesssim
  h^2 \| \nabla v \|_{\mcT_h}^2
  + h \| \nablash v \|_{\Gamma_h}^2 
  \label{eq:transition-to-surface-I}
\end{align}
where in the last step, the Poincar\'e
inequality~\eqref{eq:poincare-I} was applied.
\end{proof}
\begin{remark}
  In \citet{BurmanHansboLarson2015},
  the discrete bilinear form 
  $a^1(v,w) = (\nablash v, \nablash)_{\mcK_h}$ 
  was augmented with the face-based stabilization term 
  \begin{align}
    \tau j_h(v,w) = 
    \tau (n_F \cdot \jump{\nabla v},
    n_F \cdot \jump{\nabla w})_{\mcF_h}
    \label{eq:jh-def}
  \end{align}
  to prove optimal a priori error and condition number estimates using
  a discrete Poincar\'e inequality of the form
  \begin{equation}
    h^{-1}\| v - \lambda_{\Gamma_h}(v) \|^2_{\mcT_h}
    \lesssim
    \| \nablash v \|_{\Gamma_h}^2 
    +  \| n_F \cdot \jump{\nabla v} \|_{\mcF_h}^2 
\end{equation}
for $v \in V_h$. As before, $\tau$ denotes a positive stabilization parameter
which has to be chosen large enough.

Compared to the face-based
stabilization~\eqref{eq:jh-def},
the full gradient stabilization~\eqref{eq:sh-def} 
has three main advantages: Firstly, its implementation
is extremely cheap and immediately available in many 
finite element codes. Secondly, 
the stencil of the discretization operator
is not enlarged, as opposed to using a face-based penalty operator.
Thirdly, anticipating the numerical results in
Section~\ref{sec:numerical_results},
the accuracy and conditioning of a
full gradient stabilized surface method
is less sensitive to the choice of the stability parameter
$\tau$ than for a face-based stabilized scheme.
\label{rem:face-based-vs-full-grad-stab}
\end{remark}

\subsection{Bounds for the Condition Number}
\label{ssec:condition-number-estimate}
With the help of the Poincar\'e estimates derived in the
previous section, we now show that the condition number of the stiffness matrix
associated with the bilinear form~\eqref{eq:weak-cutfem-formulation}
can be bounded by $O(h^{-2})$ independently of the position of the
surface $\Gamma$ relative to the background mesh~$\mcT_h$.
Let $\{\phi_i\}_{i=1}^N$ be the standard piecewise linear basis
functions associated with $\mcT_h$ and thus
$v = \sum_{i=1}^N V_i \phi_i$ for $v \in V_h$ and expansion
coefficients $V = \{V_i\}_{i=1}^N \in \RR^N$.
The stiffness matrix $\mcA$ is given by the relation
\begin{align}
  ( \mcA V, W )_{\RR^N}  = A_h(v, w) \quad \foralls v, w \in
  V_h
  \label{eq:stiffness-matrix}
\end{align}
Recalling the definition of $V_{h,0}$
the stiffness matrix $\mcA$ 
clearly is a bijective linear mapping 
$\mcA:\widehat{\RR}^N \to \ker(\mcA)^{\perp}$
where we set $\widehat{\RR}^N = \RR^N /\ker(\mcA)$
to factor out the one-dimensional kernel given by 
$\ker{\mcA} = \spann\{(1,\ldots,1)^{\top}\}$.
The operator norm and condition number of the matrix $\mcA$ are then defined by
\begin{align}
  \| \mcA \|_{\RR^N}
  = \sup_{V \in \widehat{\RR}^N\setminus\bfzero}
  \dfrac{\| \mcA V \|_{\RR^N}}{\|V\|_{\RR^N}}
\quad \text{and}
\quad
  \kappa(\mcA) = \| \mcA \|_{\RR^N} \| \mcA^{-1} \|_{\RR^N}
  \label{eq:operator-norm-and-condition-number-def}
\end{align}
respectively.
Following the approach in~\citet{ErnGuermond2006}, 
a bound for the condition number can be derived 
by combining the well-known estimate
\begin{align}
  h^{d/2} \| V \|_{\RR^N}
  \lesssim \| v_h \|_{L^2(\mcT_h)}
  \lesssim
  h^{d/2} \| V \|_{\RR^N}
  \label{eq:mass-matrix-scaling}
\end{align}
which holds for any quasi-uniform mesh $\mcT_h$,
with the Poincar\'e-type estimate~\eqref{eq:discrete-poincare-Nh} and
the following inverse estimate:
\begin{lemma} Let $v \in V_{h,0}$ then the following inverse estimate holds
  \label{lem:inverse-estimate-Ah}
  \begin{align}
    \| v \|_{A_h} \lesssim h^{-3/2} \| v \|_{\mcT_h}
    \label{eq:inverse-estimate-Ah}
  \end{align}
  \begin{proof}
   Recall that  $\| v \|_{A_h}=\| v \|_{a_h}+ \| v \|_{s_h}$. First, 
    employ the standard inverse estimate~\eqref{eq:inverse-estimate-grad}
    to obtain
    \begin{align}
     \| v \|_{s_h}^2= h \| \nabla v \|_{\mcT_h}^2
      \lesssim h^{-1} \| v \|_{\mcT_h}^2
    \end{align}
    Next,  the inverse
    estimates~\eqref{eq:inverse-estimate-cut-v-on-K} and
    \eqref{eq:inverse-estimate-grad} gives
    \begin{align}
       \| v \|_{a_h}^2=
      \| \nabla v \|_{\mcK_h}^2
      \lesssim
      h^{-1}\| \nabla v \|_{\mcT_h}^2
      \lesssim
      h^{-3}\| v \|_{\mcT_h}^2
    \end{align}
  which concludes the proof.
  \end{proof}
\end{lemma}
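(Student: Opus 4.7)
The statement to prove asserts a $O(h^{-3/2})$ bound for the full $A_h$-norm in terms of the (full, $d$-dimensional) $L^2$ norm on the active mesh $\mcT_h$. My plan is to prove the two constituent pieces $\|v\|_{s_h}$ and $\|v\|_{a_h}$ separately and then combine them, since by definition
\begin{equation*}
  \| v \|_{A_h}^2 = \| v \|_{a_h}^2 + \| v \|_{s_h}^2
  = \| \nabla v \|_{\mcK_h}^2 + h \| \nabla v \|_{\mcT_h}^2.
\end{equation*}
Note that the statement is purely local and inverse in character, so I do not expect the zero-mean condition defining $V_{h,0}$ to play any role; the estimate should in fact hold for every $v \in V_h$.

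For the stabilization term, the plan is to apply the standard bulk inverse inequality~\eqref{eq:inverse-estimate-grad} elementwise over $\mcT_h$, which yields $\|\nabla v\|_T^2 \lesssim h^{-2}\|v\|_T^2$ and hence $\|v\|_{s_h}^2 = h\|\nabla v\|_{\mcT_h}^2 \lesssim h^{-1}\|v\|_{\mcT_h}^2$. This already satisfies the claimed bound (with room to spare in the power of $h$), so this term is never the bottleneck.

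For the $a_h$ term the gradient is evaluated on the cut surface pieces $K = \Gamma_h \cap T$, so the plan is to first transfer the control from the cut $K$ to the full element $T$ using the cut inverse inequality~\eqref{eq:inverse-estimate-cut-v-on-K} applied to $\nabla v$, giving $\|\nabla v\|_{K \cap T} \lesssim h^{-1/2}\|\nabla v\|_T$. Summing over $\mcK_h$ and then applying the standard inverse estimate~\eqref{eq:inverse-estimate-grad} once more produces $\|v\|_{a_h}^2 \lesssim h^{-1}\|\nabla v\|_{\mcT_h}^2 \lesssim h^{-3}\|v\|_{\mcT_h}^2$. Combining this worst piece with the $s_h$ bound and taking the square root delivers the asserted $h^{-3/2}$ inverse inequality.

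The only subtle step is the first one — the passage from the possibly arbitrarily small cut piece $K \cap T$ to the full element $T$. If I had to quote an inverse inequality on a polynomial restricted to an uncontrollably small subset of $T$ it would fail, but since the inequality in~\eqref{eq:inverse-estimate-cut-v-on-K} has been explicitly established for cut configurations in~\cite{HansboHansboLarson2003} (with a constant independent of the cut position), this step is safe and the rest is routine scaling. Hence I expect no genuine obstacle beyond keeping track of the powers of $h$.
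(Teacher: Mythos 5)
Your proposal is correct and follows essentially the same route as the paper: bound $\|v\|_{s_h}^2 \lesssim h^{-1}\|v\|_{\mcT_h}^2$ by the standard inverse estimate, and bound $\|v\|_{a_h}^2 \lesssim h^{-1}\|\nabla v\|_{\mcT_h}^2 \lesssim h^{-3}\|v\|_{\mcT_h}^2$ by chaining the cut inverse inequality with the standard one. Your observations that the zero-mean condition is not needed and that the cut inverse estimate is the only delicate ingredient are both accurate.
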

We are now in the position to prove the main result of this section:
\begin{theorem} 
  \label{thm:condition-number-estimate}
  The condition number of the stiffness matrix satisfies
  the estimate
\begin{equation}
\kappa( \mcA )\lesssim h^{-2}
\end{equation}
where the hidden constant depends only on the quasi-uniformness
parameters.
\end{theorem}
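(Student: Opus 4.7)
The plan is to follow the standard Ern--Guermond strategy: bound $\|\mcA\|_{\RR^N}$ and $\|\mcA^{-1}\|_{\RR^N}$ separately, working at all times on the quotient space $\widehat{\RR}^N$ so that the Poincar\'e estimate from Lemma~\ref{lem:discrete-poincare-Nh} is available (since $V_{h,0}$ corresponds exactly to functions with $\lambda_{\Gamma_h}(v)=0$). The mass-matrix scaling~\eqref{eq:mass-matrix-scaling}, the inverse estimate from Lemma~\ref{lem:inverse-estimate-Ah}, and the coercivity/continuity of $A_h$ with respect to $\|\cdot\|_{A_h}$ will be the four workhorses.

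For the operator norm, I would compute
\begin{align*}
  \| \mcA V \|_{\RR^N}
  &= \sup_{W \in \widehat{\RR}^N\setminus\bfzero} \frac{(\mcA V,W)_{\RR^N}}{\|W\|_{\RR^N}}
  = \sup_{w \in V_{h,0}\setminus\{0\}} \frac{A_h(v,w)}{\|W\|_{\RR^N}}.
\end{align*}
Using continuity~\eqref{eq:Ah-boundedness} followed by the inverse estimate~\eqref{eq:inverse-estimate-Ah} on both factors and then the right half of~\eqref{eq:mass-matrix-scaling}, I obtain
\begin{align*}
  A_h(v,w) \lesssim \|v\|_{A_h}\|w\|_{A_h} \lesssim h^{-3}\|v\|_{\mcT_h}\|w\|_{\mcT_h} \lesssim h^{d-3}\|V\|_{\RR^N}\|W\|_{\RR^N},
\end{align*}
which yields $\|\mcA\|_{\RR^N} \lesssim h^{d-3}$.

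For the inverse, I would use coercivity together with the Poincar\'e estimate~\eqref{eq:discrete-poincare-Nh}. Since $v \in V_{h,0}$, $\lambda_{\Gamma_h}(v)=0$, and since $\|\nablash v\|_{\Gamma_h}^2 \leq \|\nabla v\|_{\mcK_h}^2 = \|v\|_{a_h}^2$ in the full-gradient case (trivially an equality in the tangential case), Lemma~\ref{lem:discrete-poincare-Nh} combined with the definition of $\|\cdot\|_{s_h}$ gives
\begin{align*}
  h^{-1}\|v\|_{\mcT_h}^2 \lesssim \|v\|_{a_h}^2 + \|v\|_{s_h}^2 = \|v\|_{A_h}^2 \lesssim A_h(v,v) = (\mcA V,V)_{\RR^N} \leq \|\mcA V\|_{\RR^N}\|V\|_{\RR^N}.
\end{align*}
Combining with the left half of~\eqref{eq:mass-matrix-scaling} in the form $h^d\|V\|_{\RR^N}^2 \lesssim \|v\|_{\mcT_h}^2$ yields $h^{d-1}\|V\|_{\RR^N}^2 \lesssim \|\mcA V\|_{\RR^N}\|V\|_{\RR^N}$, so $\|\mcA^{-1}\|_{\RR^N} \lesssim h^{1-d}$. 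Multiplying the two bounds gives $\kappa(\mcA) \lesssim h^{d-3}\cdot h^{1-d} = h^{-2}$.

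The only genuinely delicate step is the second one, where the Poincar\'e inequality must absorb all of $h^{-1}\|v\|_{\mcT_h}^2$ into $\|v\|_{A_h}^2$; this is precisely what motivated Lemma~\ref{lem:discrete-poincare-Nh}, whose right-hand side matches the two pieces of $\|v\|_{A_h}^2$ (the surface energy and the full-gradient stabilization scaled by $h$). Everything else is a routine application of results already stated in the excerpt, so no further obstacles are expected.
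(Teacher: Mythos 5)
Your proposal is correct and follows essentially the same route as the paper: the bound $\|\mcA\|_{\RR^N}\lesssim h^{d-3}$ via the inverse estimate~\eqref{eq:inverse-estimate-Ah} and the scaling~\eqref{eq:mass-matrix-scaling}, and the bound $\|\mcA^{-1}\|_{\RR^N}\lesssim h^{1-d}$ via the Poincar\'e estimate~\eqref{eq:discrete-poincare-Nh} and coercivity. The only cosmetic difference is that you spell out explicitly why the right-hand side of the Poincar\'e inequality is absorbed by $\|v\|_{A_h}^2$ (namely $\|\nablash v\|_{\Gamma_h}^2\le\|v\|_{a_h}^2$ and $h\|\nabla v\|_{\mcT_h}^2=\|v\|_{s_h}^2$), which the paper leaves implicit.
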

\begin{proof} We need to bound $\| \mcA \|_{\RR^N}$ and $\| \mcA^{-1} \|_{\RR^N}$. 
 First observe that for $w \in V_h$,
\begin{equation}
  \| w \|_{A_h}
\lesssim h^{-3/2} \| w \|_{\mcT_h}
\lesssim h^{(d-3)/2}\|W\|_{\RR^N}
\end{equation}
where the inverse estimate~\eqref{eq:inverse-estimate-Ah}
and equivalence~\eqref{eq:mass-matrix-scaling}
were successively used.
Thus
\begin{align}
  \| \mcA V\|_{\RR^N} &= \sup_{W \in \RR^N } 
  \frac{( \mcA V, W)_{\RR^N}}{\| W \|_{\RR^d}}
  = \sup_{w \in V_h }  \frac{A_h(v,w)}{\| w \|_{A_h}} 
  \frac{\| w \|_{A_h}}{\| W \|_{\RR^N}}
\lesssim h^{(d-3)/2} \| v \|_{A_h} 
\lesssim h^{d-3}\|V\|_{\RR^N}
\end{align}
and thus by the definition of the operator norm,
$
\| \mcA \|_{\RR^N} \lesssim h^{d-3}
$.
To estimate $\| \mcA^{-1}\|_{\RR^N}$,
start from \eqref{eq:mass-matrix-scaling} and combine the Poincar\'e
inequality~\eqref{eq:discrete-poincare-Nh} with 
a Cauchy-Schwarz inequality to arrive at the following chain of
estimates:
\begin{align}
  \| V \|^2_{\RR^N} 
  \lesssim h^{-d} \| v \|^2_{\mcT_h} 
  \lesssim h^{1-d} A_h(v,v) 
  = h^{1-d} (V, \mcA V)_{\RR^N}
  \lesssim h^{1-d} \| V \|_{\RR^N} \| \mcA V \|_{\RR^N}
\end{align}
and hence $\| V \|_{\RR^N} \lesssim h^{1-d}\| \mcA V\|_{\RR^N}$. 
Now setting $ V = \mcA^{-1} W$ we conclude that
 $
 \| \mcA^{-1}\|_{\RR^N} \lesssim h^{1-d}
 $
and combining estimates for $\| \mcA\|_{\RR^N}$ and $\| \mcA^{-1}\|_{\RR^N}$ the theorem follows.
\end{proof}

\section{A Priori Error Estimates}
\label{sec:a-priori-est}
This section is devoted to the proof of the main a priori
estimates for the weak formulation~\eqref{eq:weak-cutfem-formulation}.
We proceed in two steps.
First, we establish an abstract Strang-type lemma which reveals that
the overall error can be split into an
interpolation error and a consistency error.
Next, we provide a bound for the consistency error in order to
complete the a priori estimate of the energy norm error.
Finally, using a duality argument, we establish an optimal $L^2$ error 
bound where we use the smoothness of the dual
function to obtain sufficient control of the consistency error.

\subsection{Strang's Lemma}
\begin{lemma} 
  \label{lem:strang}
With $u$ the solution of \eqref{eq:LB}
and $u_h$ the solution of \eqref{eq:weak-cutfem-formulation} it holds
\begin{align}
  \| u^e - u_h \|_{A_h} 
  &\leqslant
  2\| u^e - \pi_h u^e \|_{A_h}
+ \sup_{v \in V_h} 
\dfrac{ 
l_h(v) - A_h(u^e,v)
}{
  \| v \|_{A_h}
}
\end{align}
\end{lemma}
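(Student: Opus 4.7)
The plan is a standard Strang-type argument: interpolate the continuous solution, split the error into a best-approximation piece and a Galerkin defect piece, and express the defect via the consistency functional.

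First, I would add and subtract the interpolant and apply the triangle inequality:
\begin{align*}
\| u^e - u_h \|_{A_h}
\leqslant
\| u^e - \pi_h u^e \|_{A_h}
+
\| e_h \|_{A_h},
\qquad
e_h := \pi_h u^e - u_h \in V_h.
\end{align*}
The task then reduces to showing that $\| e_h \|_{A_h}$ is bounded by $\| u^e - \pi_h u^e \|_{A_h}$ plus the consistency supremum.

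Next, I would exploit that $\|\cdot\|_{A_h}^2 = A_h(\cdot,\cdot)$ is actually an identity (so the coercivity statement preceding \eqref{eq:Ah-boundedness} is trivial), which gives
\begin{align*}
\| e_h \|_{A_h}
=
\frac{A_h(e_h, e_h)}{\|e_h\|_{A_h}}
\leqslant
\sup_{v \in V_h} \frac{A_h(e_h, v)}{\|v\|_{A_h}}.
\end{align*}
Inserting $\pm u^e$ linearly decomposes the numerator,
\begin{align*}
A_h(e_h, v)
= A_h(\pi_h u^e - u^e, v) + \bigl( A_h(u^e, v) - A_h(u_h, v) \bigr),
\end{align*}
where the first summand is bounded by $\| u^e - \pi_h u^e \|_{A_h} \| v \|_{A_h}$ using the continuity \eqref{eq:Ah-boundedness}. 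For the second summand I would invoke the discrete equation $A_h(u_h, v) = l_h(v)$, replacing $A_h(u_h, v)$ by $l_h(v)$ so that the parenthesis becomes exactly $A_h(u^e, v) - l_h(v)$. Dividing by $\| v \|_{A_h}$, taking the sup, and noting that replacing $v$ by $-v$ turns $A_h(u^e,v) - l_h(v)$ into $l_h(v) - A_h(u^e, v)$ without changing the value of the sup, I would combine with the triangle inequality to conclude.

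The only slightly delicate point, and what I would call the main obstacle, is that the discrete Galerkin identity is only postulated for test functions in $V_{h,0}$, whereas the supremum in the statement runs over all of $V_h$. The fix is constant-mode invariance: the bilinear form $A_h$ involves only gradients and hence vanishes against additive constants, so writing any $v \in V_h$ as $v_0 + \lambda_{\Gamma_h}(v)$ one has $A_h(e_h,v) = A_h(e_h, v_0)$ and $\| v \|_{A_h} = \| v_0 \|_{A_h}$; an identical normalization for the consistency functional (which is what the compatibility $\int_\Gamma f = 0$ ultimately enforces at the discrete level) shows that the sup over $V_h$ equals the sup over $V_{h,0}$, after which the Galerkin substitution above is legal. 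No further ingredients are needed — the argument is essentially algebraic and does not use any geometric or interpolation estimates beyond those already collected.
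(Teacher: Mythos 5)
Your argument is correct and essentially identical to the paper's proof: triangle inequality after inserting the interpolant, the exact coercivity identity $\|e_h\|_{A_h}^2 = A_h(e_h,e_h)$ applied to the discrete error, Galerkin orthogonality to convert $A_h(u_h,\cdot)$ into $l_h(\cdot)$, and Cauchy--Schwarz on the interpolation term, yielding the constant $2$. Your extra care about testing only with $V_{h,0}$ rather than $V_h$ addresses a point the paper silently skips; just note that you only need the trivial inequality $\sup_{V_{h,0}} \leqslant \sup_{V_h}$ for the consistency functional (the claimed equality of the two suprema would actually fail, since $l_h$ is not exactly invariant under adding constants because $\int_{\Gamma_h} f^e \neq 0$ in general), and the argument goes through unchanged.
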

\begin{proof}
Thanks to triangle inequality 
$\| u^e - u_h \|_{A_h} \leqslant \| u^e - \pi_h u^e \|_{A_h}
+ \| u_h - \pi_h u^e \|_{A_h}$, it is sufficient to consider
the discrete error $e_h = u_h - \pi_h u^e $. Now observe that
\begin{align}
  \| e_h \|_{A_h}^2 
  & = A_h(u_h - \pi_h u^e,e_h)
\\
& = 
l_h(e_h) - A_h(u^e, e_h) + A_h(u^e - \pi_h u^e,e_h)
\\
& \lesssim
\sup_{v \in V_h} 
\dfrac{ 
l_h(v) - A_h(u^e,v)
}{
  \| v \|_{A_h}
}
+
\sup_{v \in V_h} 
\dfrac{ 
    A_h(u^e - \pi_h u^e,v)
}{
  \| v \|_{A_h}
}
\label{eq:strang-lemma-step-3}
\end{align}
and apply a Cauchy-Schwarz inequality 
on the second term in~\eqref{eq:strang-lemma-step-3}
to conclude the proof.
\end{proof}

\subsection{Consistency Error Estimates}
Next, we derive a representation of the consistency error, showing
that it can be attributed to a geometric error and a consistency error
introduced by the stabilization form $s_h$.
\label{ssec:consistency-error}
\begin{lemma}
  \label{lem:consistency-error-est}
  Let $v \in V_h$ and $\phi \in H^2(\Gamma)$, then
  the following estimates hold
  \begin{align}
      | l_h(v) - A_h(u^e,v) |
      &\lesssim 
    h \| f\|_{\Gamma} \| v \|_{A_h}
    \label{eq:consistency-error-est-primal}
    \\
      |l_h(\pi_h \phi^e) - A_h(u^e, \pi_h \phi^e)|
      &\lesssim 
    h^2 
    \| f \|_{\Gamma} 
    \| \phi \|_{2,\Gamma} 
    \label{eq:consistency-error-est-dual}
  \end{align}
\end{lemma}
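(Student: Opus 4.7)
The plan is to rewrite the consistency error as a sum of three pieces whose sizes can be controlled separately. Since $u$ satisfies $a(u, v^l) = l(v^l)$ for $v^l$ the lift of $v$ to $\Gamma$, I insert and subtract intermediate terms to obtain
\begin{align*}
l_h(v) - A_h(u^e, v) = \bigl[\, l_h(v) - l(v^l)\, \bigr] - \bigl[\, a_h(u^e, v) - a(u, v^l)\, \bigr] - \tau s_h(u^e, v).
\end{align*}
The first bracket is a geometric error on the right-hand side, the second is the usual geometric perturbation of the bilinear form, and the last is a stabilization consistency defect introduced by applying $s_h$ to the non-polynomial function $u^e$.

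For the primal estimate, the first bracket is controlled by a change of variables on $\Gammah$ and $\|1-|B|\|_{L^\infty}\lesssim h^2$ from Lemma~\ref{lem:detBbounds}. For the second, I split the full gradient as $\nabla = \nablash + \Qsh \nabla$. The tangential part $(\nablash u^e, \nablash v)_{\Gammah} - a(u, v^l)$ reduces, via the identities for $B$ together with the change of variables, to integrals against $\Ps - BB^T$ and $1-|B|$, both $O(h^2)$. The normal part $(\Qsh\nabla u^e, \Qsh\nabla v)_{\Gammah}$ is bounded using the $\Gammah$-analogue of the normal-gradient lemma $\|\Qsh \nabla u^e\|_{\Gammah}\lesssim h\|\nablas u\|_\Gamma$ together with $\|\Qsh\nabla v\|_{\Gammah}\leqslant \|v\|_{a_h}$, and this is the leading $O(h)$ contribution. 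The stabilization piece is bounded via Cauchy--Schwarz by $\tau\|u^e\|_{s_h}\|v\|_{s_h}$, where the extension stability estimate~\eqref{eq:stability-estimate-for-extension} with $\delta\sim h$ gives $\|u^e\|_{s_h} = h^{1/2}\|\nabla u^e\|_{\mcT_h}\lesssim h\|u\|_{1,\Gamma}$. Using elliptic regularity $\|u\|_{2,\Gamma}\lesssim \|f\|_\Gamma$ and absorbing $\|v\|_{\Gammah}$, $\|v\|_{a_h}$, and $\|v\|_{s_h}$ into $\|v\|_{A_h}$ yields the first estimate.

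For the dual estimate, I test with $v = \pi_h\phi^e$ and exploit the $H^2$-regularity of $\phi$ to upgrade each $h$ to $h^2$. In the first bracket, the factor $\|\pi_h\phi^e\|_{\Gammah}\lesssim \|\phi\|_{2,\Gamma}$ is supplied by a cut-trace inequality combined with interpolation stability. In the second bracket, the tangential part is already $O(h^2)$; for the normal part I decompose $\nabla\pi_h\phi^e = \nabla\phi^e + \nabla(\pi_h\phi^e - \phi^e)$ and combine the normal-gradient lemma applied to $\phi^e$ with the interpolation estimates~\eqref{eq:interpest0}--\eqref{eq:interpest1} to show $\|\Qsh\nabla \pi_h\phi^e\|_{\Gammah}\lesssim h\|\phi\|_{2,\Gamma}$, which paired with $\|\Qsh\nabla u^e\|_{\Gammah}\lesssim h\|f\|_\Gamma$ produces $O(h^2)$. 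The stabilization piece is handled with the same splitting of $\pi_h\phi^e$: the $\phi^e$ contribution is bounded by $h\|\nabla u^e\|_{\mcT_h}\|\nabla \phi^e\|_{\mcT_h}\lesssim h\cdot h^{1/2}\cdot h^{1/2}\|f\|_\Gamma\|\phi\|_{2,\Gamma}$ through~\eqref{eq:stability-estimate-for-extension}, and the interpolation remainder contributes an additional factor of $h$.

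The main technical obstacle is the treatment of the normal components $\Qsh\nabla u^e$ and $\Qsh\nabla \pi_h\phi^e$ on $\Gammah$: these are the only source of consistency error of order $h$ rather than $h^2$, and their smallness hinges on the fact that $\nabla u^e$ vanishes in the direction of the exact normal $n$, together with $\|n^e - n_h\|_{L^\infty(\Gammah)}\lesssim h$. Once the $\Gammah$-variant of the normal-gradient lemma is in place, the remaining steps amount to careful but routine bookkeeping of geometric and interpolation estimates.
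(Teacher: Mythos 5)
Your proposal is correct and follows essentially the same route as the paper's proof: the identical three-term decomposition obtained by inserting $a(u,v^l)-l(v^l)=0$, the splitting $\nabla = \nablash + \Qsh\nabla$ with the tangential part controlled by $\Ps - BB^T$ and $1-|B|$ and the normal part by the normal-gradient lemma, the stabilization defect bounded via the extension stability estimate with $\delta\sim h$, and the same upgrade to $O(h^2)$ for $v=\pi_h\phi^e$ by decomposing $\nabla\pi_h\phi^e = \nabla\phi^e + \nabla(\pi_h\phi^e-\phi^e)$. No gaps.
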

\begin{proof}
  Recalling the definition~\eqref{eq:Ah-def} of $A_h(\cdot,\cdot)$ and inserting
  $a(u,v^l) - l(v^l) = 0$ yields
  \begin{align}
    l_h(v) - A_h(u^e, v)
    &=
    \bigl(
    l_h(v) - 
    l(v^l)
    \bigr)
    + 
    \bigl(a(u,v^l) - a_h(u^e, v)
    \bigr)- s_h(u^e, v)
    \\
    &= I + II + III
  \end{align}
  which we estimate next.

  \noindent  {\bf Term $\boldsymbol{I}$.} 
  For the quadrature error of the right hand side we have
  \begin{align}
    l(v^l)- l_h(v) &= (f,v^l)_\Gamma - (f^e, v)_\Gammah
    = (f,v^l (1 -|B|^{-1}))_\Gamma
    \lesssim h^2  \| f \|_{\Gamma}\|v^l\|_{\Gamma}
    \lesssim h^2  \| f \|_{\Gamma}\| v \|_{a_h}
  \end{align}
  where in the last step, the Poincar\'e
  inequality~\eqref{eq:poincare-I} was used
  after passing from $\Gamma$ to $\Gamma_h$.

  \noindent  {\bf Term $\boldsymbol{II}$.} 
  Using the splitting
  $ \nabla = \nablash + \Qsh \nabla$ 
  the discrete form $a_h$ can be decomposed as
  \begin{align}
    a_h(u^e, v)
    = (\nablash u^e, \nablash v)_{\Gamma_h}
    + (\Qsh \nabla u^e, \Qsh \nabla v)_{\Gamma_h}
  \end{align}
  Inserting this identity into $II$ gives
  \begin{align}
    II = 
    \bigl(
    (\nablas u, \nablas v^l)_{\Gamma}
    - (\nablash u^e, \nablash v)_{\Gamma_h}
    \bigr)
    - (\Qsh \nabla u^e, \Qsh \nabla v)_{\Gamma_h}
    = II_a  + II_b
    \label{eq:ah-split}
  \end{align}
  A bound for the first term $II_a$ can be derived by
  lifting the tangential part of $a_h(\cdot,\cdot)$ to $\Gamma$
  and 
  employing the bound for
  determinant~\eqref{eq:detBbound}
  the operator norm estimates~\eqref{eq:BBTbound},
  and the norm
  equivalences~\eqref{eq:norm-equivalences-ve}--\eqref{eq:norm-equivalences-wh},
  \begin{align}
    II_a 
    &= 
    (\nablas u,\nablas v^l)_{\mcK_h^l} - (\nablash u,\nablash
    v)_{\mcK_h}
    \\
    &=
    (\nablas u,\nablas v^l)_{\mcK_h^l} - ((\nablash u)^l,(\nablash
    v)^l |B|^{-1})_{\mcK_h^l}
    \\
    &=
    ((\Ps - |B|^{-1} B B^T)\nablas u,\nablas v^l)_{\mcK_h^l}
    \\
    &=
    ((\Ps - B B^T)+ (1-|B|^{-1}) B B^T)\nablas u,\nablas v^l)_{\mcK_h^l}
    \\
    &\lesssim h^2 \| f \|_{\Gamma} \|\nablas v^l\|_{\mcK_h^l}
  \end{align}
  Turning to the second term $II_b$ and applying 
  the inequality~\eqref{eq:normal-grad-est} to $\Qsh \nabla u^e$
  gives
  \begin{align}
    II_b 
    & \lesssim 
    \|\Qsh \nabla u^e \|_{\Gammah} 
    \|\Qsh \nabla v \|_{\Gammah} 
    \\
    &
    \lesssim
    h \| f \|_{\Gamma} 
    \|\Qsh \nabla v \|_{\Gammah} 
    \label{eq:estimate-IIb}
  \end{align}
  For general $v \in V_{h}$, the last factor in $II_b$
  is simply bounded by $\| \nabla v \|_{\Gamma_h}$
  while in the special case 
  $v = \pi_h \phi^e$,
  the interpolation estimate~\eqref{eq:interpolenergy}
  and
  a second application of~\eqref{eq:normal-grad-est} to $\Qsh \nabla
  \phi^e$ yields
  \begin{align}
    \|\Qsh \nabla \pi_h \phi^e \|_{\Gammah} 
    \lesssim
    \|\Qsh \nabla \phi^e \|_{\Gammah} 
    +
    \|\Qsh \nabla (\phi^e - \pi_h \phi^e)\|_{\Gammah} 
    \lesssim
    h \| \phi \|_{2,\Gamma}
  \end{align}

\noindent{\bf Term $\boldsymbol{III}$.}
  Combine Cauchy-Schwarz's inequality with the stability
  estimate~\eqref{eq:stability-estimate-for-extension},
  choosing $\delta \sim h$,
  to obtain
  \begin{align}
    s_h(u^e, v) 
    &= h(\nabla u^e, \nabla v)_{\mcT_h}
    \lesssim h^{1/2} \| \nabla u^e \|_{\mcT_h} 
    \| v \|_{s_h}
    \lesssim h \| \nablas u \|_{\Gamma} \| v\|_{s_h}
    \lesssim h \| f \|_{\Gamma} \| v \|_{s_h}
    \label{eq:estimate-III}
  \end{align}
  Now again considering the case
  $v = \pi_h \phi^e$, we can estimate~\eqref{eq:estimate-III} further
  via
  \begin{align}
    \| \pi_h \phi^e \|_{s_h}
    \lesssim
    h^{1/2} \| \nabla \phi^e \|_{U_{\delta}(\Gamma)}
    +
    \| \pi_h \phi^e - \phi^e \|_{s_h}
    \lesssim
    h \| \nabla \phi \|_{\Gamma} + 
    h^2 \| \phi \|_{2,\Gamma}
  \end{align}
  where we pick a $\delta$ in the stability
  estimate~\eqref{eq:stability-estimate-for-extension}
  such that $\mcT_h \subseteq U_{\delta}(\Gamma)$
  and $\delta \lesssim h$.
  This concludes the proof.
\end{proof}
\begin{remark}
The previous Lemma shows that
the consistency error can be improved by one order of $h$
when the ``consistency error functional''
 $R_h(\cdot) = l_h(\cdot) - A_h(u^e, \cdot)$ is evaluated for
special functions $v \in V_h$ which are
interpolation of smooth functions $v \in H^2(\Gamma)$.
It is precisely this improved estimate which will allow us to prove 
optimal $L^2$ error estimates using a duality argument, despite the 
fact that the consistency error is generally of order $h$.
\end{remark}

\subsection{A Priori Error Estimates}

\begin{theorem} The following a priori error estimates hold
  \label{thm:aprioriest}
  \begin{align}
    \label{eq:energyest}
    \| u^e - u_h \|_{A_h} &\lesssim h \| f \|_{\Gamma}
    \\ \label{eq:ltwoest}
    \| u^e - u_h \|_{\Gammah} &\lesssim h^2 \| f \|_{\Gamma}
  \end{align}
\end{theorem}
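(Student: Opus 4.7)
The starting point is Strang's lemma, Lemma~\ref{lem:strang}, which leaves two quantities to control: the interpolation error $\|u^e - \pi_h u^e\|_{A_h}$ and the consistency residual $R_h(v) = l_h(v) - A_h(u^e,v)$. The $a_h$-component of the interpolation error is supplied directly by Lemma~\ref{lem:interpolenergy}. For the stabilization component
\begin{equation*}
\|u^e - \pi_h u^e\|_{s_h}^2 = h\|\nabla(u^e - \pi_h u^e)\|_{\mcT_h}^2,
\end{equation*}
I would combine the standard Scott--Zhang estimate~\eqref{eq:interpest0} on elements of $\mcT_h$ with the extension stability bound~\eqref{eq:stability-estimate-for-extension} at $\delta\sim h$; this produces $O(h^2)\|u\|_{2,\Gamma}$, i.e., one order better than needed. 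The residual $R_h$ is controlled by the primal estimate~\eqref{eq:consistency-error-est-primal} of Lemma~\ref{lem:consistency-error-est}. Invoking the elliptic regularity bound~\eqref{eq:ellreg} to replace $\|u\|_{2,\Gamma}$ by $\|f\|_\Gamma$ closes~\eqref{eq:energyest}.

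\textbf{Plan for the $L^2$ estimate.} For~\eqref{eq:ltwoest} I would run an Aubin--Nitsche duality argument. Set $e = u^e - u_h$ on $\Gammah$, let $\psi$ denote the mean-free part of $e^l$ on $\Gamma$, and let $\phi \in H^2(\Gamma)/\RR$ solve the dual Laplace--Beltrami problem $-\Delta_\Gamma \phi = \psi$; elliptic regularity then yields $\|\phi\|_{2,\Gamma} \lesssim \|\psi\|_\Gamma \lesssim \|e\|_{\Gammah}$. Starting from the identity $\|\psi\|_\Gamma^2 = a(\phi, e^l)$, rewriting $a(\phi, u_h^l)$ as $a_h(\phi^e, u_h)$ modulo geometric corrections (using the change-of-variables for $B$ and $|B|$ exactly as in the analysis of term $II_a$ in Lemma~\ref{lem:consistency-error-est}), and inserting the Scott--Zhang interpolant produces a splitting of the form
\begin{equation*}
\|\psi\|_\Gamma^2 = A_h(e,\, \phi^e - \pi_h\phi^e) + R_h(\pi_h \phi^e) + E_{\text{geom}},
\end{equation*}
where $E_{\text{geom}}$ collects the geometric quadrature residuals. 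The first term is bounded by Cauchy--Schwarz together with the energy estimate~\eqref{eq:energyest} already in hand and Lemma~\ref{lem:interpolenergy} applied to $\phi$; the second is where the \emph{improved} dual consistency estimate~\eqref{eq:consistency-error-est-dual} supplies the crucial extra factor of $h$; and $E_{\text{geom}}$ is handled by Lemmas~\ref{lem:BBTbound} and~\ref{lem:detBbounds}. Each contribution is of order $h^2\|f\|_\Gamma\|\phi\|_{2,\Gamma}$, and dividing through by $\|\psi\|_\Gamma$ gives~\eqref{eq:ltwoest}. The mean value $\lambda_\Gamma(e^l)$ relating $\|e^l\|_\Gamma$ to $\|\psi\|_\Gamma$ is $O(h^2)$ because $u$ is mean-free on $\Gamma$ and $u_h$ is mean-free on $\Gammah$, so it does not spoil the bound.

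\textbf{Expected main obstacle.} The technical heart of the argument is the bookkeeping of the geometric errors in the duality step, i.e., making the transition between $a(\phi,u_h^l)$ on $\Gamma$ and $a_h(\phi^e,u_h)$ on $\Gammah$ quantitative enough that all correction terms are provably $O(h^2)$. The decomposition $\nabla = \nablash + Q_{\Gammah}\nabla$ is essential here, because the normal-gradient contribution requires the additional factor of $h$ supplied by~\eqref{eq:normal-grad-est}, applied this time \emph{twice}: once to $\phi^e$ and once to $\pi_h\phi^e$ (the latter exploiting the $H^2$-regularity of $\phi$ together with Lemma~\ref{lem:interpolenergy}). This is exactly the mechanism used for term $II_b$ in Lemma~\ref{lem:consistency-error-est}, but the payoff only materialises when it is combined with the improved dual bound on $s_h(u^e,\pi_h\phi^e)$ from the same lemma, so care must be taken that all pieces of the splitting simultaneously reach $h^2$.
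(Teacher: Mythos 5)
Your proposal is correct and follows essentially the same route as the paper: Strang's lemma combined with the primal consistency bound \eqref{eq:consistency-error-est-primal} and elliptic regularity for the energy estimate, and an Aubin--Nitsche duality argument whose three contributions (geometric/stabilization residual, interpolation term, and the improved dual consistency bound \eqref{eq:consistency-error-est-dual}) are each driven to order $h^2$ exactly as in the paper's Terms $I$--$III$. The only differences are that you make explicit two points the paper leaves implicit, namely the $s_h$-component of the interpolation error and the $O(h^2)$ mean-value discrepancy between $\lambda_\Gamma$ and $\lambda_{\Gamma_h}$, both of which you handle correctly.
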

\begin{proof} 
  With the elliptic regularity estimate \eqref{eq:ellreg},
  \eqref{eq:energyest} is a direct consequence of 
  the interpolation estimate and
  the estimate~\eqref{eq:consistency-error-est-primal} of the consistency error arising in the Strang
  Lemma~\ref{lem:strang}.
  To prove~\eqref{eq:ltwoest}, we use the standard Aubin-Nitsche
  trick in combination with the improved
  estimate~\eqref{eq:consistency-error-est-dual}.
  More precisly, let $\psi \in L^2(\Gamma)$ and take $\phi \in
  H^2(\Gamma)$ satisfying $-\Delta_{\Gamma} \phi = \psi$
  and the elliptic regularity estimate
  $\| \phi \|_{2,\Gamma} \lesssim \| \psi \|_{\Gamma}$.
  We define $e = u - u_h^l$ and add and subtract suitable terms
  to derive the following error representation
  \begin{align}
    (e, \psi)_{\Gamma} 
    &= a(e, \phi)
    = a(e, \phi) 
    - A_h(e^e, \phi^e)
    + A_h(e^e, \phi^e)
    \\
    &= 
    \bigl(
    a(e, \phi) 
    - A_h(e^e, \phi^e)
    \bigr)
    + A_h(e^e, \phi^e - \pi_h \phi^e)
    + \bigl(
    A_h(u, \pi_h \phi^e)
    - l_h(\pi_h \phi^e)
    \bigr)
    \\
    &= 
    I + II + III
  \end{align}
  Term $III$ is precisely the one appearing in the improved
  consistency error estimate~\eqref{eq:consistency-error-est-dual},
  and consequently
  \begin{align}
   III 
   \lesssim 
   h^2 \|f \|_{\Gamma} \|\phi\|_{2,\Gamma}
   \lesssim 
   h^2 \|f \|_{\Gamma} \|\psi\|_{\Gamma}
  \end{align}
  The second term can be estimated by combining interpolation and energy norm
  estimates:
  \begin{align}
    II 
    \lesssim \| u^e - u_h \|_{A_h} \| \phi^e - \pi_h \phi^e
    \|_{A_h} \
    \lesssim h \| f \|_{\Gamma} h \| \phi \|_{2,\Gamma}
    \lesssim h \| f \|_{\Gamma} h \| \psi \|_{\Gamma}
  \end{align}
  To derive a bound for the remaining term $I$, we first 
  split of the error contributions introduced by the normal
  part of the gradient and the stabilization $s_h(\cdot, \cdot)$:
  \begin{align}
    I 
    &= 
    \bigl(
    (\nablas e, \nablas \phi)_{\Gamma}
    - (\nablash e^e, \nablash \phi^e)_{\Gamma}
    \bigr)
    - ( \Qsh \nablas e^e, \Qsh \nablas \phi^e)_{\Gamma}
    - s_h(e^e, \phi^e)
    \\
    &= I_a + I_b + I_c
  \end{align}
  Now we proceed exactly as in the proof of
  Lemma~\ref{lem:consistency-error-est}. More precisely, following the
  derivation of estimates for Term $II_a$ and $II_b$ in~\eqref{eq:ah-split}, we see
  that
  \begin{align}
    I_a 
    &\lesssim h^2 \| \nablas e \|_{\Gamma} \| \nablas \phi \|_{\Gamma}
    \lesssim h^3 \| f \|_{\Gamma} \| \psi \|_{\Gamma}
    \\
    I_b 
    &\lesssim 
    \| \nablas e^e \|_{\Gammah} \| \Qsh \nabla \phi^e \|_{\Gammah}
    \lesssim
    h \| f \|_{\Gamma} h \| \nablas \phi \|_{\Gamma}
    \lesssim
    h^2 \| f \|_{\Gamma} \| \psi \|_{\Gamma}
  \end{align}
  Similar as before, we have
  \begin{align}
    I_c 
    \lesssim 
    \| e^e \|_{s_h} 
    h^{1/2} \| \nabla \phi^e \|_{\mcT_h}
    \lesssim
    h \| f \|_{\Gamma}
    h \| \nabla \phi \|_{\Gamma}
    \lesssim
    h^2 \| f \|_{\Gamma}
    \| \psi \|_{\Gamma}
  \end{align}
  Now collecting all the estimates for $I$--$III$, dividing by
  $\| \psi \|_{\Gamma}$ and taking the supremum over $\psi \in
  L^2(\Gamma)$ concludes the proof.
\end{proof} 

\section{Numerical Results}
\label{sec:numerical_results}
This section is devoted to a series of numerical experiments
which corroborate the theoretical findings
and assess the effect of the proposed stabilization on
the accuracy of the discrete solution and the conditioning of the
discrete system. 
First, a convergence study for two test cases is conducted, 
where we also examine and compare the effect of the stabilization parameter
on the accuracy of the computed solution.
In the second series of experiments, we investigate the sensitivity of the condition
number with respect to both the surface positioning in the background
mesh and the stabilization parameter $\tau$.
In all studies, we compare the proposed full gradient
stabilization with alternative approaches to cure the discrete system
from being ill-conditioned.

\subsection{Convergence Rate Tests}
\label{ssec:convergence-rate-tests}
Following the numerical examples presented
in~\cite{BurmanHansboLarsonEtAl2015b},
we consider two test cases for the Laplace-Beltrami-type
problem
\begin{align}
  -\Delta_{\Gamma} u + u = f \quad \text{on } \Gamma
  \label{eq:laplace-beltrami-type-problem}
\end{align}
with given analytical reference solution $u$
and surface $\Gamma = \{ x \in \RR^3 : \phi(x) = 0 \}$
defined by 
a known smooth scalar function $\phi$
with $\nabla \phi(x) \neq 0 \,\forall x \in \Gamma$.
The corresponding right-hand side $f$
can be computed using the following representation of the
Laplace-Beltrami operator
\begin{align}
    \Delta_{\Gamma} u = \Delta u - n_{\Gamma} \cdot \nabla \otimes\nabla u \,
    n_{\Gamma} - \textrm{tr}(\nabla n_{\Gamma}) \nabla u \cdot
    n_{\Gamma}
\end{align}
For the first test example (Example 1) we chose
\begin{equation}
  \left\{
    \begin{aligned}
      u_1 &= 
      \sin\left(\dfrac{\pi x }{2}\right)
      \sin\left(\dfrac{\pi y }{2}\right)
      \sin\left(\dfrac{\pi z }{2}\right)
      \\
      \phi_1 &=  x^2 + y^2 + z^2 - 1
    \end{aligned}
  \right.
\end{equation}
while in the second example (Example 2), we consider the problem defined by
\begin{equation}
  \left\{
    \begin{aligned}
      u_2 &=  xy - 5y + z + xz
      \\
      \phi_2 &=  
      (x^2 - 1)^2
      +
      (y^2 - 1)^2
      +
      (z^2 - 1)^2
      +
      (x^2 + y^2 - 4)^2
      +
      (x^2 + z^2 - 4)^2
      \\
      &\quad +
      (y^2 + z^2 - 4)^2
      - 16
    \end{aligned}
  \right.
\end{equation}
The computed solutions for Example 1 and Example 2 are shown in Figure~\ref{fig:solution-examples}.
In the first convergence experiment,
the tangential gradient form
$a^1_h(v,w) = (\nablash v, \nablas w)_{\mcK_h}$ combined with either 
the full gradient stabilization $s_h$ or the face-based stabilization
$j_h$ is used. 
For the second convergence experiment, we consider the full gradient form
$a_h^2(v,w) = (\nabla v, \nabla w)_{\mcK_h}$ instead.

Starting from a structured mesh $\widetilde{\mcT}_0$ for $\Omega =
[-a,a]^3$ with $a$ large enough such that $ \Gamma \subseteq \Omega$,
a sequence of meshes
$\{\mcT_k\}_{k=0}^5$ is generated 
for each test case by successively refining
$\widetilde{\mcT}_0$ and extracting the corresponding active
mesh as defined by~\eqref{eq:narrow-band-mesh}. 
Based on the manufactured exact solutions, the experimental order of
convergence (EOC) is then calculated by
\begin{align*}
    \text{EOC}(k) = \dfrac{\log(E_{k-1}/E_{k})}{\log(2)}
\end{align*}
where $E_k$ denotes the error of the numerical
solution $u_k$ at refinement level $k$ 
measured in either the
$\| \cdot \|_{H^1(\Gamma_h)}$ or $\|\cdot\|_{L^2(\Gamma_h)}$ norm.
To examine the geometric error contributed to the non-vanishing
normal gradient component in the full gradient form $a^2_h$,
we also compute in both convergence studies
the error for the
unstabilized discretization schemes given by $a^1_h$ and $a^2_h$
and $\tau = 0$.

For the two test cases, the computed errors for the sequence of
refined meshes are summarized in
Table~\ref{tab:convergence-rates-example-1-unstabilized}--\ref{tab:convergence-rates-example-1-stabilized}
and 
Table~\ref{tab:convergence-rates-example-2-unstabilized}--\ref{tab:convergence-rates-example-2-stabilized}, respectively.
In all cases, the observed EOC confirms the first-order and second-order
convergences rates as predicted by  
Theorem~\ref{thm:aprioriest} and the
corresponding a priori error estimates 
for the unstabilized
full gradient form derived in ~\cite{DeckelnickElliottRanner2013,Reusken2014} 
and 
for the face-based stabilized tangential form analyzed in~ \cite{BurmanHansboLarson2015}.
\begin{figure}[htb]
    \begin{subfigure}[b]{0.49\textwidth}
    \includegraphics[width=1.0\textwidth]{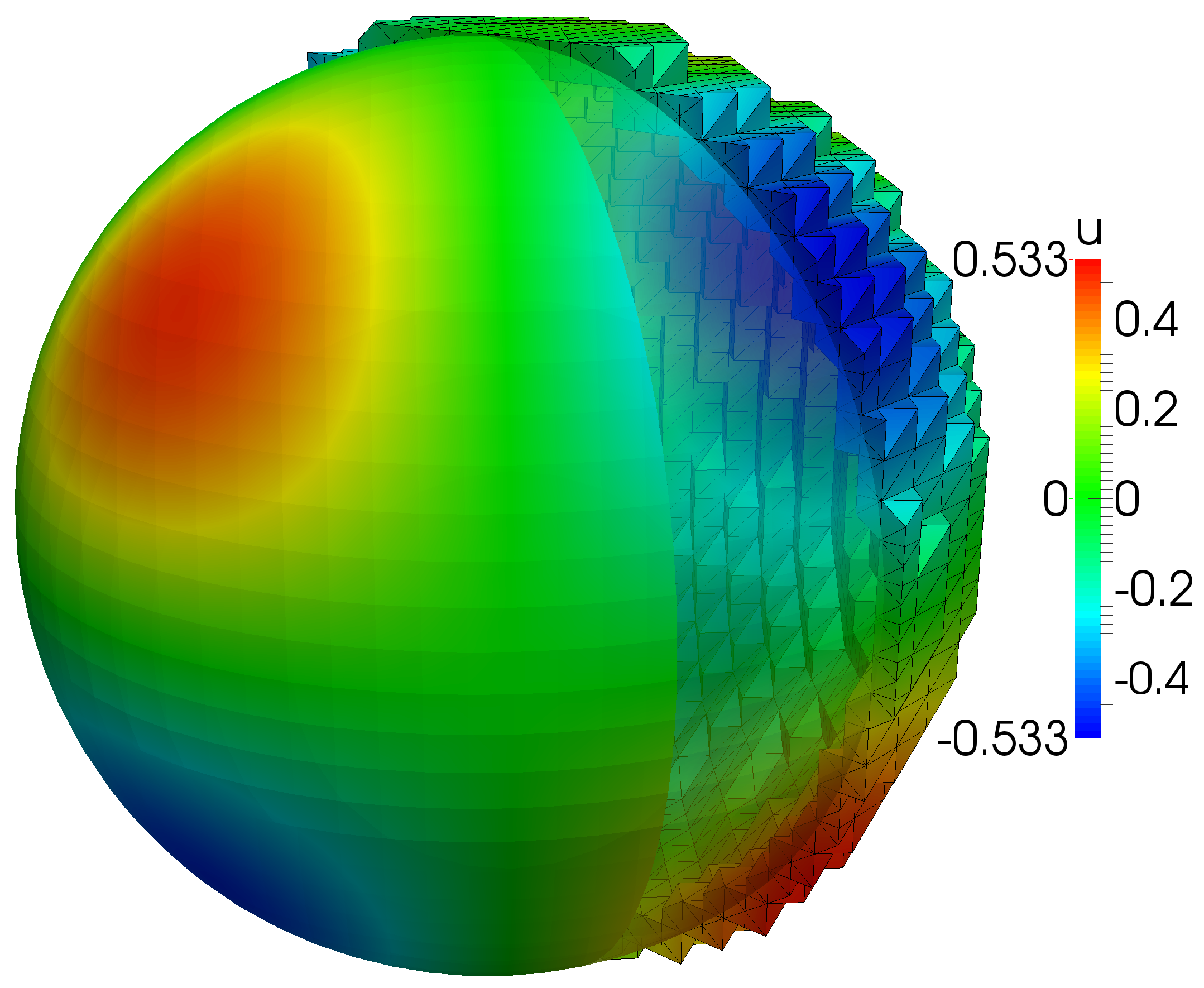}
    \end{subfigure}
    \begin{subfigure}[b]{0.49\textwidth}
    \includegraphics[width=1.0\textwidth]{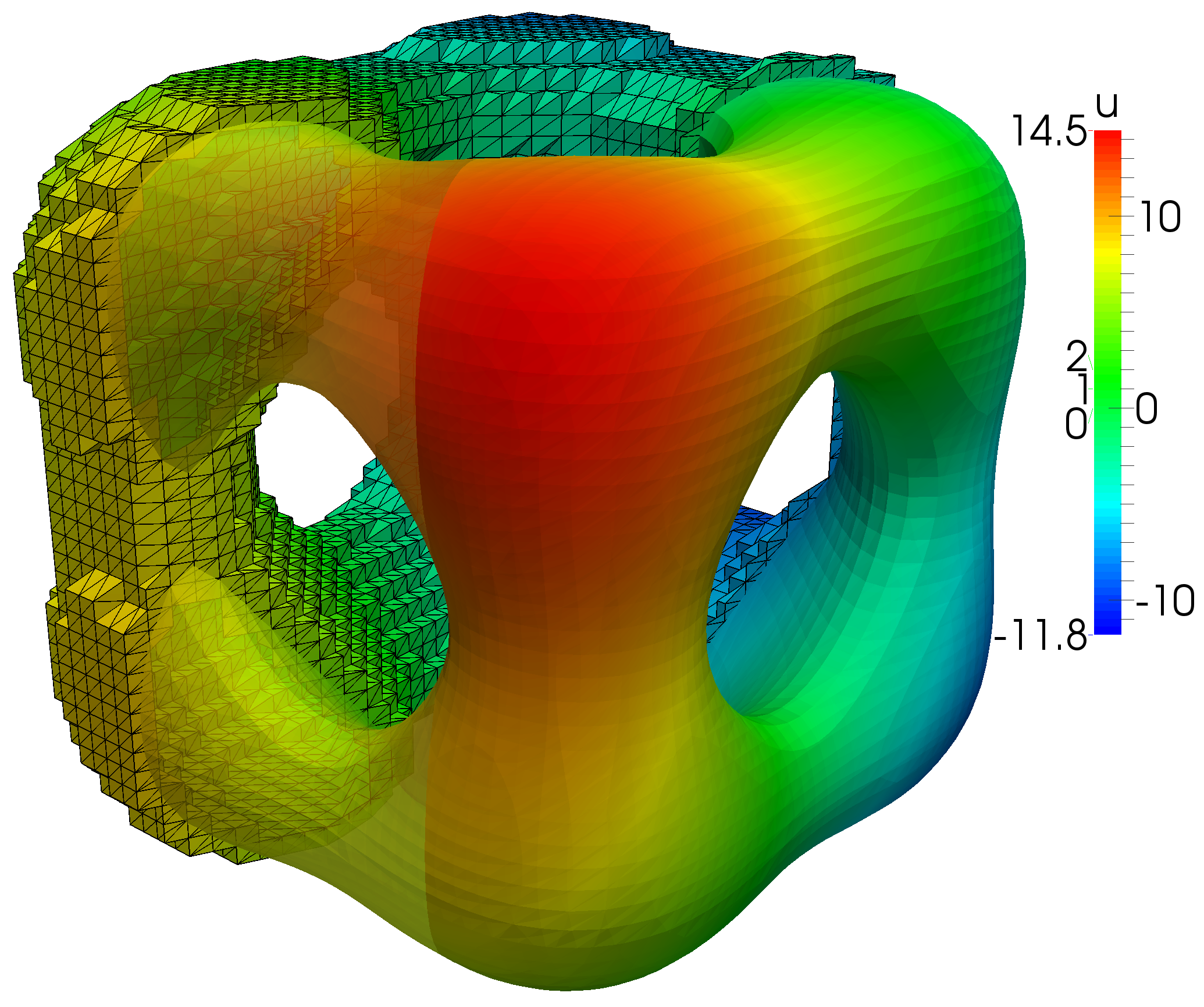}
    \end{subfigure}
    \hspace{0.5ex}
    \caption{Solution plots from the two convergence studies.
      Each plot shows both the approximation $u_h$ as
      computed on the active mesh $\mcT_h$ and the restriction of
      $u_h$ to the surface mesh $\mcK_h$.
      (Left) Solution for Example 1 computed on $\mcT_3$ with $h \approx 7.7\cdot10^{-2}$
      using the face-based stabilized tangential form $a^1 + \tau j_h$ with $\tau = 0.01$.
      (Right) Solution for Example 2 computed on $\mcT_3$ with
      $h \approx 1.15 \cdot 10^{-1}$ using the full gradient stabilized
      full gradient form $a^2 + \tau s_h$ with $\tau = 1.0$.
    }
    \label{fig:solution-examples}
\end{figure}
A closer look at
Table~\ref{tab:convergence-rates-example-1-unstabilized} reveals
that the method based on the unstabilized full gradient form leads, as expected, to a slightly higher error when compared to its tangential
gradient counterpart, in agreement with the numerical results presented
in~\cite{Reusken2014}. A similar increase of the discretization error
can be observed for the second test example, see
Table~\ref{tab:convergence-rates-example-2-unstabilized}.

Turning to the comparison of the full gradient stabilization $s_h$ and 
the face-based stabilization $j_h$ presented in
Table~\ref{tab:convergence-rates-example-1-stabilized}
and Table~\ref{tab:convergence-rates-example-2-stabilized},
we observe that the choice of the stabilization parameter $\tau$
is much less critical for the accuracy
of full gradient stabilized methods then for
the face-based stabilized counterparts, in particular when
the error is measured in the $L^2$ norm. Indeed, while the
$L^2$ error increases only by a factor of  $\sim$$0.1$
when $\tau$ changes from $0.01$ to $1.0$ in the full gradient
stabilization, the error grows by a factor of $\sim$$12$-$14$ when
the face-based stabilization is used.

\begin{table}[htb]
  \footnotesize
  \begin{subtable}[t]{0.49\textwidth}
    \centering
    \begin {tabular}{cr<{\pgfplotstableresetcolortbloverhangright }@{}l<{\pgfplotstableresetcolortbloverhangleft }cr<{\pgfplotstableresetcolortbloverhangright }@{}l<{\pgfplotstableresetcolortbloverhangleft }c}%
\toprule $k$&\multicolumn {2}{c}{$\|u_k - u \|_{1,\Gamma _h}$}&EOC&\multicolumn {2}{c}{$\|u_k - u \|_{\Gamma _h}$}&EOC\\\midrule %
\pgfutilensuremath {0}&$1.36$&$\cdot 10^{0}$&--&$2.59$&$\cdot 10^{-1}$&--\\%
\pgfutilensuremath {1}&$7.72$&$\cdot 10^{-1}$&\pgfutilensuremath {0.82}&$7.39$&$\cdot 10^{-2}$&\pgfutilensuremath {1.81}\\%
\pgfutilensuremath {2}&$3.85$&$\cdot 10^{-1}$&\pgfutilensuremath {1.00}&$1.87$&$\cdot 10^{-2}$&\pgfutilensuremath {1.98}\\%
\pgfutilensuremath {3}&$1.92$&$\cdot 10^{-1}$&\pgfutilensuremath {1.00}&$4.56$&$\cdot 10^{-3}$&\pgfutilensuremath {2.04}\\%
\pgfutilensuremath {4}&$9.59$&$\cdot 10^{-2}$&\pgfutilensuremath {1.00}&$1.13$&$\cdot 10^{-3}$&\pgfutilensuremath {2.01}\\%
\pgfutilensuremath {5}&$4.80$&$\cdot 10^{-2}$&\pgfutilensuremath {1.00}&$2.83$&$\cdot 10^{-4}$&\pgfutilensuremath {2.00}\\\bottomrule %
\end {tabular}%

    \\[1ex]
      \caption{Tangential gradient}
  \end{subtable}
  \begin{subtable}[t]{0.49\textwidth}
    \centering
    \begin {tabular}{cr<{\pgfplotstableresetcolortbloverhangright }@{}l<{\pgfplotstableresetcolortbloverhangleft }cr<{\pgfplotstableresetcolortbloverhangright }@{}l<{\pgfplotstableresetcolortbloverhangleft }c}%
\toprule $k$&\multicolumn {2}{c}{$\|u_k - u \|_{1,\Gamma _h}$}&EOC&\multicolumn {2}{c}{$\|u_k - u \|_{\Gamma _h}$}&EOC\\\midrule %
\pgfutilensuremath {0}&$1.36$&$\cdot 10^{0}$&--&$2.66$&$\cdot 10^{-1}$&--\\%
\pgfutilensuremath {1}&$8.70$&$\cdot 10^{-1}$&\pgfutilensuremath {0.65}&$1.18$&$\cdot 10^{-1}$&\pgfutilensuremath {1.18}\\%
\pgfutilensuremath {2}&$4.55$&$\cdot 10^{-1}$&\pgfutilensuremath {0.94}&$3.39$&$\cdot 10^{-2}$&\pgfutilensuremath {1.79}\\%
\pgfutilensuremath {3}&$2.36$&$\cdot 10^{-1}$&\pgfutilensuremath {0.95}&$8.99$&$\cdot 10^{-3}$&\pgfutilensuremath {1.91}\\%
\pgfutilensuremath {4}&$1.19$&$\cdot 10^{-1}$&\pgfutilensuremath {0.99}&$2.27$&$\cdot 10^{-3}$&\pgfutilensuremath {1.99}\\%
\pgfutilensuremath {5}&$5.93$&$\cdot 10^{-2}$&\pgfutilensuremath {1.00}&$5.68$&$\cdot 10^{-4}$&\pgfutilensuremath {2.00}\\\bottomrule %
\end {tabular}%

    \\[1ex]
      \caption{Full gradient}
  \end{subtable}
  \\
  \caption{Convergence rates for Example 1 comparing the
    unstabilized tangential gradient formulation with the unstabilized
  full gradient method.}
  \label{tab:convergence-rates-example-1-unstabilized}
\end{table}
\begin{table}[htb]
  \footnotesize
  \begin{subtable}[t]{0.49\textwidth}
    \centering
    \begin {tabular}{cr<{\pgfplotstableresetcolortbloverhangright }@{}l<{\pgfplotstableresetcolortbloverhangleft }cr<{\pgfplotstableresetcolortbloverhangright }@{}l<{\pgfplotstableresetcolortbloverhangleft }c}%
\toprule $k$&\multicolumn {2}{c}{$\|u_k - u \|_{1,\Gamma _h}$}&EOC&\multicolumn {2}{c}{$\|u_k - u \|_{\Gamma _h}$}&EOC\\\midrule %
\pgfutilensuremath {0}&$1.36$&$\cdot 10^{0}$&--&$2.61$&$\cdot 10^{-1}$&--\\%
\pgfutilensuremath {1}&$7.69$&$\cdot 10^{-1}$&\pgfutilensuremath {0.82}&$7.54$&$\cdot 10^{-2}$&\pgfutilensuremath {1.79}\\%
\pgfutilensuremath {2}&$3.84$&$\cdot 10^{-1}$&\pgfutilensuremath {1.00}&$1.89$&$\cdot 10^{-2}$&\pgfutilensuremath {2.00}\\%
\pgfutilensuremath {3}&$1.92$&$\cdot 10^{-1}$&\pgfutilensuremath {1.00}&$4.58$&$\cdot 10^{-3}$&\pgfutilensuremath {2.04}\\%
\pgfutilensuremath {4}&$9.59$&$\cdot 10^{-2}$&\pgfutilensuremath {1.00}&$1.13$&$\cdot 10^{-3}$&\pgfutilensuremath {2.02}\\%
\pgfutilensuremath {5}&$4.80$&$\cdot 10^{-2}$&\pgfutilensuremath {1.00}&$2.84$&$\cdot 10^{-4}$&\pgfutilensuremath {2.00}\\\bottomrule %
\end {tabular}%

    \\[1ex]
    \begin {tabular}{cr<{\pgfplotstableresetcolortbloverhangright }@{}l<{\pgfplotstableresetcolortbloverhangleft }cr<{\pgfplotstableresetcolortbloverhangright }@{}l<{\pgfplotstableresetcolortbloverhangleft }c}%
\toprule $k$&\multicolumn {2}{c}{$\|u_k - u \|_{1,\Gamma _h}$}&EOC&\multicolumn {2}{c}{$\|u_k - u \|_{\Gamma _h}$}&EOC\\\midrule %
\pgfutilensuremath {0}&$1.37$&$\cdot 10^{0}$&--&$2.76$&$\cdot 10^{-1}$&--\\%
\pgfutilensuremath {1}&$7.72$&$\cdot 10^{-1}$&\pgfutilensuremath {0.83}&$8.58$&$\cdot 10^{-2}$&\pgfutilensuremath {1.69}\\%
\pgfutilensuremath {2}&$3.83$&$\cdot 10^{-1}$&\pgfutilensuremath {1.01}&$2.01$&$\cdot 10^{-2}$&\pgfutilensuremath {2.09}\\%
\pgfutilensuremath {3}&$1.91$&$\cdot 10^{-1}$&\pgfutilensuremath {1.00}&$4.77$&$\cdot 10^{-3}$&\pgfutilensuremath {2.07}\\%
\pgfutilensuremath {4}&$9.56$&$\cdot 10^{-2}$&\pgfutilensuremath {1.00}&$1.16$&$\cdot 10^{-3}$&\pgfutilensuremath {2.04}\\%
\pgfutilensuremath {5}&$4.79$&$\cdot 10^{-2}$&\pgfutilensuremath {1.00}&$2.87$&$\cdot 10^{-4}$&\pgfutilensuremath {2.01}\\\bottomrule %
\end {tabular}%

    \\[1ex]
    \begin {tabular}{cr<{\pgfplotstableresetcolortbloverhangright }@{}l<{\pgfplotstableresetcolortbloverhangleft }cr<{\pgfplotstableresetcolortbloverhangright }@{}l<{\pgfplotstableresetcolortbloverhangleft }c}%
\toprule $k$&\multicolumn {2}{c}{$\|u_k - u \|_{1,\Gamma _h}$}&EOC&\multicolumn {2}{c}{$\|u_k - u \|_{\Gamma _h}$}&EOC\\\midrule %
\pgfutilensuremath {0}&$1.60$&$\cdot 10^{0}$&--&$3.76$&$\cdot 10^{-1}$&--\\%
\pgfutilensuremath {1}&$8.54$&$\cdot 10^{-1}$&\pgfutilensuremath {0.90}&$1.44$&$\cdot 10^{-1}$&\pgfutilensuremath {1.38}\\%
\pgfutilensuremath {2}&$3.95$&$\cdot 10^{-1}$&\pgfutilensuremath {1.11}&$2.93$&$\cdot 10^{-2}$&\pgfutilensuremath {2.30}\\%
\pgfutilensuremath {3}&$1.94$&$\cdot 10^{-1}$&\pgfutilensuremath {1.02}&$6.22$&$\cdot 10^{-3}$&\pgfutilensuremath {2.24}\\%
\pgfutilensuremath {4}&$9.58$&$\cdot 10^{-2}$&\pgfutilensuremath {1.02}&$1.36$&$\cdot 10^{-3}$&\pgfutilensuremath {2.19}\\%
\pgfutilensuremath {5}&$4.77$&$\cdot 10^{-2}$&\pgfutilensuremath {1.01}&$3.15$&$\cdot 10^{-4}$&\pgfutilensuremath {2.11}\\\bottomrule %
\end {tabular}%

    \\[1ex]
    \caption{With full gradient stabilization $s_h$}
  \end{subtable}
  \begin{subtable}[t]{0.49\textwidth}
    \centering
    \begin {tabular}{cr<{\pgfplotstableresetcolortbloverhangright }@{}l<{\pgfplotstableresetcolortbloverhangleft }cr<{\pgfplotstableresetcolortbloverhangright }@{}l<{\pgfplotstableresetcolortbloverhangleft }c}%
\toprule $k$&\multicolumn {2}{c}{$\|u_k - u \|_{1,\Gamma _h}$}&EOC&\multicolumn {2}{c}{$\|u_k - u \|_{\Gamma _h}$}&EOC\\\midrule %
\pgfutilensuremath {0}&$1.37$&$\cdot 10^{0}$&--&$2.70$&$\cdot 10^{-1}$&--\\%
\pgfutilensuremath {1}&$7.71$&$\cdot 10^{-1}$&\pgfutilensuremath {0.83}&$8.33$&$\cdot 10^{-2}$&\pgfutilensuremath {1.70}\\%
\pgfutilensuremath {2}&$3.85$&$\cdot 10^{-1}$&\pgfutilensuremath {1.00}&$2.16$&$\cdot 10^{-2}$&\pgfutilensuremath {1.95}\\%
\pgfutilensuremath {3}&$1.92$&$\cdot 10^{-1}$&\pgfutilensuremath {1.00}&$5.36$&$\cdot 10^{-3}$&\pgfutilensuremath {2.01}\\%
\pgfutilensuremath {4}&$9.58$&$\cdot 10^{-2}$&\pgfutilensuremath {1.00}&$1.33$&$\cdot 10^{-3}$&\pgfutilensuremath {2.01}\\%
\pgfutilensuremath {5}&$4.80$&$\cdot 10^{-2}$&\pgfutilensuremath {1.00}&$3.35$&$\cdot 10^{-4}$&\pgfutilensuremath {1.99}\\\bottomrule %
\end {tabular}%

    \\[1ex]
    \begin {tabular}{cr<{\pgfplotstableresetcolortbloverhangright }@{}l<{\pgfplotstableresetcolortbloverhangleft }cr<{\pgfplotstableresetcolortbloverhangright }@{}l<{\pgfplotstableresetcolortbloverhangleft }c}%
\toprule $k$&\multicolumn {2}{c}{$\|u_k - u \|_{1,\Gamma _h}$}&EOC&\multicolumn {2}{c}{$\|u_k - u \|_{\Gamma _h}$}&EOC\\\midrule %
\pgfutilensuremath {0}&$1.51$&$\cdot 10^{0}$&--&$3.46$&$\cdot 10^{-1}$&--\\%
\pgfutilensuremath {1}&$8.46$&$\cdot 10^{-1}$&\pgfutilensuremath {0.84}&$1.49$&$\cdot 10^{-1}$&\pgfutilensuremath {1.22}\\%
\pgfutilensuremath {2}&$4.08$&$\cdot 10^{-1}$&\pgfutilensuremath {1.05}&$4.37$&$\cdot 10^{-2}$&\pgfutilensuremath {1.76}\\%
\pgfutilensuremath {3}&$2.01$&$\cdot 10^{-1}$&\pgfutilensuremath {1.02}&$1.15$&$\cdot 10^{-2}$&\pgfutilensuremath {1.93}\\%
\pgfutilensuremath {4}&$9.96$&$\cdot 10^{-2}$&\pgfutilensuremath {1.01}&$2.91$&$\cdot 10^{-3}$&\pgfutilensuremath {1.98}\\%
\pgfutilensuremath {5}&$4.98$&$\cdot 10^{-2}$&\pgfutilensuremath {1.00}&$7.32$&$\cdot 10^{-4}$&\pgfutilensuremath {1.99}\\\bottomrule %
\end {tabular}%

    \\[1ex]
    \begin {tabular}{cr<{\pgfplotstableresetcolortbloverhangright }@{}l<{\pgfplotstableresetcolortbloverhangleft }cr<{\pgfplotstableresetcolortbloverhangright }@{}l<{\pgfplotstableresetcolortbloverhangleft }c}%
\toprule $k$&\multicolumn {2}{c}{$\|u_k - u \|_{1,\Gamma _h}$}&EOC&\multicolumn {2}{c}{$\|u_k - u \|_{\Gamma _h}$}&EOC\\\midrule %
\pgfutilensuremath {0}&$2.14$&$\cdot 10^{0}$&--&$5.35$&$\cdot 10^{-1}$&--\\%
\pgfutilensuremath {1}&$1.71$&$\cdot 10^{0}$&\pgfutilensuremath {0.32}&$4.55$&$\cdot 10^{-1}$&\pgfutilensuremath {0.24}\\%
\pgfutilensuremath {2}&$8.14$&$\cdot 10^{-1}$&\pgfutilensuremath {1.07}&$2.07$&$\cdot 10^{-1}$&\pgfutilensuremath {1.13}\\%
\pgfutilensuremath {3}&$3.07$&$\cdot 10^{-1}$&\pgfutilensuremath {1.40}&$6.59$&$\cdot 10^{-2}$&\pgfutilensuremath {1.65}\\%
\pgfutilensuremath {4}&$1.20$&$\cdot 10^{-1}$&\pgfutilensuremath {1.35}&$1.78$&$\cdot 10^{-2}$&\pgfutilensuremath {1.88}\\%
\pgfutilensuremath {5}&$5.39$&$\cdot 10^{-2}$&\pgfutilensuremath {1.16}&$4.56$&$\cdot 10^{-3}$&\pgfutilensuremath {1.97}\\\bottomrule %
\end {tabular}%

    \\[1ex]
    \caption{With face-based stabilization $j_h$}
  \end{subtable}
  \caption{Convergence rates for Example 1. The solution is computed from
     a combination of the tangential gradient form $a_h^1(v,w) = (\nablash v, \nablash w)_{\mcK_h}$
    with different stabilizations and penalty parameters. Penalty
    parameter $\tau$ was set to $\tau = 0.01$ (top), $\tau = 0.1$
    (middle), and $\tau = 1.0$ (bottom). }
  \label{tab:convergence-rates-example-1-stabilized}
\end{table}

\begin{table}[htb]
  \footnotesize
  \begin{subtable}[t]{0.49\textwidth}
    \centering
    \begin {tabular}{cr<{\pgfplotstableresetcolortbloverhangright }@{}l<{\pgfplotstableresetcolortbloverhangleft }cr<{\pgfplotstableresetcolortbloverhangright }@{}l<{\pgfplotstableresetcolortbloverhangleft }c}%
\toprule $k$&\multicolumn {2}{c}{$\|u_k - u \|_{1,\Gamma _h}$}&EOC&\multicolumn {2}{c}{$\|u_k - u \|_{\Gamma _h}$}&EOC\\\midrule %
\pgfutilensuremath {0}&$2.24$&$\cdot 10^{1}$&--&$1.49$&$\cdot 10^{1}$&--\\%
\pgfutilensuremath {1}&$7.92$&$\cdot 10^{0}$&\pgfutilensuremath {1.50}&$1.86$&$\cdot 10^{0}$&\pgfutilensuremath {3.00}\\%
\pgfutilensuremath {2}&$3.48$&$\cdot 10^{0}$&\pgfutilensuremath {1.19}&$5.48$&$\cdot 10^{-1}$&\pgfutilensuremath {1.76}\\%
\pgfutilensuremath {3}&$1.72$&$\cdot 10^{0}$&\pgfutilensuremath {1.02}&$1.18$&$\cdot 10^{-1}$&\pgfutilensuremath {2.21}\\%
\pgfutilensuremath {4}&$8.50$&$\cdot 10^{-1}$&\pgfutilensuremath {1.01}&$2.92$&$\cdot 10^{-2}$&\pgfutilensuremath {2.02}\\%
\pgfutilensuremath {5}&$4.24$&$\cdot 10^{-1}$&\pgfutilensuremath {1.00}&$7.32$&$\cdot 10^{-3}$&\pgfutilensuremath {1.99}\\\bottomrule %
\end {tabular}%

    \\[1ex]
      \caption{Tangential gradient}
  \end{subtable}
  \begin{subtable}[t]{0.49\textwidth}
    \centering
    \begin {tabular}{cr<{\pgfplotstableresetcolortbloverhangright }@{}l<{\pgfplotstableresetcolortbloverhangleft }cr<{\pgfplotstableresetcolortbloverhangright }@{}l<{\pgfplotstableresetcolortbloverhangleft }c}%
\toprule $k$&\multicolumn {2}{c}{$\|u_k - u \|_{1,\Gamma _h}$}&EOC&\multicolumn {2}{c}{$\|u_k - u \|_{\Gamma _h}$}&EOC\\\midrule %
\pgfutilensuremath {0}&$2.48$&$\cdot 10^{1}$&--&$1.62$&$\cdot 10^{1}$&--\\%
\pgfutilensuremath {1}&$9.30$&$\cdot 10^{0}$&\pgfutilensuremath {1.41}&$2.28$&$\cdot 10^{0}$&\pgfutilensuremath {2.83}\\%
\pgfutilensuremath {2}&$5.48$&$\cdot 10^{0}$&\pgfutilensuremath {0.76}&$5.41$&$\cdot 10^{-1}$&\pgfutilensuremath {2.07}\\%
\pgfutilensuremath {3}&$2.84$&$\cdot 10^{0}$&\pgfutilensuremath {0.95}&$1.49$&$\cdot 10^{-1}$&\pgfutilensuremath {1.86}\\%
\pgfutilensuremath {4}&$1.46$&$\cdot 10^{0}$&\pgfutilensuremath {0.96}&$4.06$&$\cdot 10^{-2}$&\pgfutilensuremath {1.88}\\%
\pgfutilensuremath {5}&$7.38$&$\cdot 10^{-1}$&\pgfutilensuremath {0.98}&$1.04$&$\cdot 10^{-2}$&\pgfutilensuremath {1.96}\\\bottomrule %
\end {tabular}%

    \\[1ex]
      \caption{Full gradient}
  \end{subtable}
  \\
  \caption{Convergence rates for Example 2 comparing the
    unstabilized tangential gradient formulation with the unstabilized
  full gradient method.}
  \label{tab:convergence-rates-example-2-unstabilized}
\end{table}
\begin{table}[htb]
  \footnotesize
  \begin{subtable}[t]{0.49\textwidth}
    \centering
    \begin {tabular}{cr<{\pgfplotstableresetcolortbloverhangright }@{}l<{\pgfplotstableresetcolortbloverhangleft }cr<{\pgfplotstableresetcolortbloverhangright }@{}l<{\pgfplotstableresetcolortbloverhangleft }c}%
\toprule $k$&\multicolumn {2}{c}{$\|u_k - u \|_{1,\Gamma _h}$}&EOC&\multicolumn {2}{c}{$\|u_k - u \|_{\Gamma _h}$}&EOC\\\midrule %
\pgfutilensuremath {0}&$2.49$&$\cdot 10^{1}$&--&$1.63$&$\cdot 10^{1}$&--\\%
\pgfutilensuremath {1}&$9.29$&$\cdot 10^{0}$&\pgfutilensuremath {1.42}&$2.31$&$\cdot 10^{0}$&\pgfutilensuremath {2.82}\\%
\pgfutilensuremath {2}&$5.48$&$\cdot 10^{0}$&\pgfutilensuremath {0.76}&$5.46$&$\cdot 10^{-1}$&\pgfutilensuremath {2.08}\\%
\pgfutilensuremath {3}&$2.84$&$\cdot 10^{0}$&\pgfutilensuremath {0.95}&$1.50$&$\cdot 10^{-1}$&\pgfutilensuremath {1.86}\\%
\pgfutilensuremath {4}&$1.46$&$\cdot 10^{0}$&\pgfutilensuremath {0.96}&$4.10$&$\cdot 10^{-2}$&\pgfutilensuremath {1.88}\\%
\pgfutilensuremath {5}&$7.38$&$\cdot 10^{-1}$&\pgfutilensuremath {0.98}&$1.05$&$\cdot 10^{-2}$&\pgfutilensuremath {1.96}\\\bottomrule %
\end {tabular}%

    \\[1ex]
    \begin {tabular}{cr<{\pgfplotstableresetcolortbloverhangright }@{}l<{\pgfplotstableresetcolortbloverhangleft }cr<{\pgfplotstableresetcolortbloverhangright }@{}l<{\pgfplotstableresetcolortbloverhangleft }c}%
\toprule $k$&\multicolumn {2}{c}{$\|u_k - u \|_{1,\Gamma _h}$}&EOC&\multicolumn {2}{c}{$\|u_k - u \|_{\Gamma _h}$}&EOC\\\midrule %
\pgfutilensuremath {0}&$2.61$&$\cdot 10^{1}$&--&$1.76$&$\cdot 10^{1}$&--\\%
\pgfutilensuremath {1}&$9.28$&$\cdot 10^{0}$&\pgfutilensuremath {1.49}&$2.70$&$\cdot 10^{0}$&\pgfutilensuremath {2.71}\\%
\pgfutilensuremath {2}&$5.48$&$\cdot 10^{0}$&\pgfutilensuremath {0.76}&$6.18$&$\cdot 10^{-1}$&\pgfutilensuremath {2.13}\\%
\pgfutilensuremath {3}&$2.84$&$\cdot 10^{0}$&\pgfutilensuremath {0.95}&$1.69$&$\cdot 10^{-1}$&\pgfutilensuremath {1.87}\\%
\pgfutilensuremath {4}&$1.46$&$\cdot 10^{0}$&\pgfutilensuremath {0.96}&$4.59$&$\cdot 10^{-2}$&\pgfutilensuremath {1.89}\\%
\pgfutilensuremath {5}&$7.38$&$\cdot 10^{-1}$&\pgfutilensuremath {0.98}&$1.17$&$\cdot 10^{-2}$&\pgfutilensuremath {1.97}\\\bottomrule %
\end {tabular}%

    \\[1ex]
    \begin {tabular}{cr<{\pgfplotstableresetcolortbloverhangright }@{}l<{\pgfplotstableresetcolortbloverhangleft }cr<{\pgfplotstableresetcolortbloverhangright }@{}l<{\pgfplotstableresetcolortbloverhangleft }c}%
\toprule $k$&\multicolumn {2}{c}{$\|u_k - u \|_{1,\Gamma _h}$}&EOC&\multicolumn {2}{c}{$\|u_k - u \|_{\Gamma _h}$}&EOC\\\midrule %
\pgfutilensuremath {0}&$3.55$&$\cdot 10^{1}$&--&$2.66$&$\cdot 10^{1}$&--\\%
\pgfutilensuremath {1}&$1.34$&$\cdot 10^{1}$&\pgfutilensuremath {1.41}&$8.44$&$\cdot 10^{0}$&\pgfutilensuremath {1.66}\\%
\pgfutilensuremath {2}&$5.99$&$\cdot 10^{0}$&\pgfutilensuremath {1.16}&$2.09$&$\cdot 10^{0}$&\pgfutilensuremath {2.01}\\%
\pgfutilensuremath {3}&$2.93$&$\cdot 10^{0}$&\pgfutilensuremath {1.03}&$5.54$&$\cdot 10^{-1}$&\pgfutilensuremath {1.92}\\%
\pgfutilensuremath {4}&$1.47$&$\cdot 10^{0}$&\pgfutilensuremath {0.99}&$1.43$&$\cdot 10^{-1}$&\pgfutilensuremath {1.96}\\%
\pgfutilensuremath {5}&$7.39$&$\cdot 10^{-1}$&\pgfutilensuremath {0.99}&$3.60$&$\cdot 10^{-2}$&\pgfutilensuremath {1.99}\\\bottomrule %
\end {tabular}%

    \\[1ex]
    \caption{With full gradient stabilization}
  \end{subtable}
  \begin{subtable}[t]{0.49\textwidth}
    \centering
    \begin {tabular}{cr<{\pgfplotstableresetcolortbloverhangright }@{}l<{\pgfplotstableresetcolortbloverhangleft }cr<{\pgfplotstableresetcolortbloverhangright }@{}l<{\pgfplotstableresetcolortbloverhangleft }c}%
\toprule $k$&\multicolumn {2}{c}{$\|u_k - u \|_{1,\Gamma _h}$}&EOC&\multicolumn {2}{c}{$\|u_k - u \|_{\Gamma _h}$}&EOC\\\midrule %
\pgfutilensuremath {0}&$2.47$&$\cdot 10^{1}$&--&$1.63$&$\cdot 10^{1}$&--\\%
\pgfutilensuremath {1}&$9.10$&$\cdot 10^{0}$&\pgfutilensuremath {1.44}&$2.35$&$\cdot 10^{0}$&\pgfutilensuremath {2.79}\\%
\pgfutilensuremath {2}&$5.41$&$\cdot 10^{0}$&\pgfutilensuremath {0.75}&$5.71$&$\cdot 10^{-1}$&\pgfutilensuremath {2.04}\\%
\pgfutilensuremath {3}&$2.82$&$\cdot 10^{0}$&\pgfutilensuremath {0.94}&$1.63$&$\cdot 10^{-1}$&\pgfutilensuremath {1.81}\\%
\pgfutilensuremath {4}&$1.45$&$\cdot 10^{0}$&\pgfutilensuremath {0.96}&$4.51$&$\cdot 10^{-2}$&\pgfutilensuremath {1.86}\\%
\pgfutilensuremath {5}&$7.34$&$\cdot 10^{-1}$&\pgfutilensuremath {0.98}&$1.16$&$\cdot 10^{-2}$&\pgfutilensuremath {1.95}\\\bottomrule %
\end {tabular}%

    \\[1ex]
    \begin {tabular}{cr<{\pgfplotstableresetcolortbloverhangright }@{}l<{\pgfplotstableresetcolortbloverhangleft }cr<{\pgfplotstableresetcolortbloverhangright }@{}l<{\pgfplotstableresetcolortbloverhangleft }c}%
\toprule $k$&\multicolumn {2}{c}{$\|u_k - u \|_{1,\Gamma _h}$}&EOC&\multicolumn {2}{c}{$\|u_k - u \|_{\Gamma _h}$}&EOC\\\midrule %
\pgfutilensuremath {0}&$2.43$&$\cdot 10^{1}$&--&$1.68$&$\cdot 10^{1}$&--\\%
\pgfutilensuremath {1}&$8.58$&$\cdot 10^{0}$&\pgfutilensuremath {1.50}&$2.94$&$\cdot 10^{0}$&\pgfutilensuremath {2.52}\\%
\pgfutilensuremath {2}&$5.16$&$\cdot 10^{0}$&\pgfutilensuremath {0.73}&$8.75$&$\cdot 10^{-1}$&\pgfutilensuremath {1.75}\\%
\pgfutilensuremath {3}&$2.75$&$\cdot 10^{0}$&\pgfutilensuremath {0.91}&$2.97$&$\cdot 10^{-1}$&\pgfutilensuremath {1.56}\\%
\pgfutilensuremath {4}&$1.42$&$\cdot 10^{0}$&\pgfutilensuremath {0.95}&$8.82$&$\cdot 10^{-2}$&\pgfutilensuremath {1.75}\\%
\pgfutilensuremath {5}&$7.23$&$\cdot 10^{-1}$&\pgfutilensuremath {0.98}&$2.36$&$\cdot 10^{-2}$&\pgfutilensuremath {1.90}\\\bottomrule %
\end {tabular}%

    \\[1ex]
    \begin {tabular}{cr<{\pgfplotstableresetcolortbloverhangright }@{}l<{\pgfplotstableresetcolortbloverhangleft }cr<{\pgfplotstableresetcolortbloverhangright }@{}l<{\pgfplotstableresetcolortbloverhangleft }c}%
\toprule $k$&\multicolumn {2}{c}{$\|u_k - u \|_{1,\Gamma _h}$}&EOC&\multicolumn {2}{c}{$\|u_k - u \|_{\Gamma _h}$}&EOC\\\midrule %
\pgfutilensuremath {0}&$2.49$&$\cdot 10^{1}$&--&$1.88$&$\cdot 10^{1}$&--\\%
\pgfutilensuremath {1}&$9.61$&$\cdot 10^{0}$&\pgfutilensuremath {1.37}&$5.35$&$\cdot 10^{0}$&\pgfutilensuremath {1.81}\\%
\pgfutilensuremath {2}&$6.38$&$\cdot 10^{0}$&\pgfutilensuremath {0.59}&$2.86$&$\cdot 10^{0}$&\pgfutilensuremath {0.90}\\%
\pgfutilensuremath {3}&$3.64$&$\cdot 10^{0}$&\pgfutilensuremath {0.81}&$1.29$&$\cdot 10^{0}$&\pgfutilensuremath {1.15}\\%
\pgfutilensuremath {4}&$1.76$&$\cdot 10^{0}$&\pgfutilensuremath {1.04}&$4.52$&$\cdot 10^{-1}$&\pgfutilensuremath {1.51}\\%
\pgfutilensuremath {5}&$8.13$&$\cdot 10^{-1}$&\pgfutilensuremath {1.12}&$1.37$&$\cdot 10^{-1}$&\pgfutilensuremath {1.73}\\\bottomrule %
\end {tabular}%

    \\[1ex]
    \caption{With face-based stabilization}
  \end{subtable}
  \caption{Convergence rates for Example 2. Solution is computed from
    a combination of the full gradient form $a_h^2(v,w) = (\nablas v, \nablas w)_{\mcK_h}$
    with different stabilizations and penalty parameters. Penalty
    parameter $\tau$ was set to $\tau = 0.01$ (top), $\tau = 0.1$
    (middle), and $\tau = 1.0$ (bottom). }
  \label{tab:convergence-rates-example-2-stabilized}
\end{table}

\subsection{Condition Number Tests}
\label{ssec:condition-number-tests}
In our final numerical study,
the dependency of the condition number on 
the mesh size and on the positioning of the surface 
in the background mesh is examined.
Additionally, we compare the proposed full gradient stabilization to
both face-based stabilized schemes and diagonal preconditioning as
alternative approaches to obtain robust and moderate condition
numbers for the discrete systems.

We start our numerical experiment by defining a sequence
$\{\mcT_k\}_{k=1}^6$ of tessellations of $\Omega = [-1.6, 1.6]^3$ with
mesh size $h = 3.2/5 \cdot 2^{-k/2}$.
For each $k$, we generate a family of surfaces
$\{\Gamma_{\delta}\}_{0\leqslant\delta\leqslant 1}$  
by translating the unit-sphere $S^2 = \{ x \in \RR^3 : \| x \| = 1 \}$
along the diagonal $(h,h,h)$; that is,
$\Gamma_{\delta} = S^2 + \delta (h,h,h)$ with $\delta \in [0,1]$.
For $\delta = l/500$, $l=0,\ldots,500$,
we compute the condition number $\kappa_{\delta}(\mcA)$ 
as the ratio
of the absolute value of the largest (in modulus) and smallest (in
modulus), non-zero eigenvalue. 
For the full gradient stabilized full gradient method 
with $\tau = 1.0$,
the minimum, maximum, and the arithmetic mean
of the resulting scaled condition numbers $h^{2}\kappa_{\delta}(A)$ 
for each mesh size $h$ are shown in
Table~\ref{tab:scaled-condition-number}. 

The computed figures in Table~\ref{tab:scaled-condition-number} clearly
confirm the theoretically proven $O(h^{-2})$ bound, independent
of the location of the surface in the background mesh. Additionally,
Figure~\ref{fig:condition_number-example-1}  confirms for
$\mcT_2$ the robustness of the condition number with respect
to the translation parameter $\delta$.
In contrast, the condition number is highly sensitive and clearly
unbounded as a function of $\delta$ if we set the penalty
parameter $\tau$ in~\eqref{eq:Ah-def} to $0$ as the corresponding plot in
Figure~\ref{fig:condition_number-example-1} shows.
The same figure also demonstrates that the discrete system
can be made robust by either diagonally scaling or augmenting the discrete
variational form with the face-based stabilization $j_h$, see
\cite{OlshanskiiReusken2010,Reusken2014}
and \cite{BurmanHansboLarson2015}  for the details.
\begin{table}[htb]
  \centering
  \begin{center}
    \begin {tabular}{r<{\pgfplotstableresetcolortbloverhangright }@{}l<{\pgfplotstableresetcolortbloverhangleft }ccc}%
\toprule \multicolumn {2}{c}{$h$}&$\min _{\delta }\{h^2\kappa _{\delta }(\mathcal {A})\}$&$\max _{\delta }\{h^2\kappa _{\delta }(\mathcal {A})\}$&$\mathrm {mean}_{\delta }\{h^2\kappa _{\delta }(\mathcal {A})\}$\\\midrule %
$1.00$&$\cdot 10^{-1}$&\pgfutilensuremath {1.22}&\pgfutilensuremath {1.61}&\pgfutilensuremath {1.36}\\%
$6.67$&$\cdot 10^{-2}$&\pgfutilensuremath {1.19}&\pgfutilensuremath {1.37}&\pgfutilensuremath {1.27}\\%
$5.00$&$\cdot 10^{-2}$&\pgfutilensuremath {1.21}&\pgfutilensuremath {1.36}&\pgfutilensuremath {1.26}\\%
$3.33$&$\cdot 10^{-2}$&\pgfutilensuremath {1.20}&\pgfutilensuremath {1.26}&\pgfutilensuremath {1.22}\\%
$2.50$&$\cdot 10^{-2}$&\pgfutilensuremath {1.20}&\pgfutilensuremath {1.23}&\pgfutilensuremath {1.21}\\%
$1.67$&$\cdot 10^{-2}$&\pgfutilensuremath {1.21}&\pgfutilensuremath {1.24}&\pgfutilensuremath {1.22}\\\bottomrule %
\end {tabular}%

  \end{center}
  \caption{Minimum, maximum, and arithmetic mean of the scaled condition
  number for various mesh sizes $h$. The full gradient stabilized full gradient method is used with $\tau = 1.0$.}
  \label{tab:scaled-condition-number}
\end{table}

In a final numerical experiment, we assess and compare the effect of the
stability parameter choice $\tau$ for the full gradient and face-based
stabilization on the size and position dependency of the condition number.
In our experiment,  we consider both the tangential gradient form 
$a^1_h(v,w) = (\nablash, v, \nablash w)_{\mcK_h}$ 
and the full gradient form 
$a^2_h(v,w) = (\nabla, v, \nabla w)_{\mcK_h}$ 
augmented with either the full gradient stabilization 
$s_h(v,w) = h^2(\nabla v, \nabla w)_{\mcT_h}$
or the face-based stabilization
$j_h(v,w) = (n_F \cdot \jump{\nabla v}, n_F \cdot \jump{\nabla
w})_{\mcF_h}$.  The condition numbers are computed for
the discretizations defined on $\mcT_2$ and displayed as a
function of $\delta$ in Figure~\ref{fig:condition_number-example-2}.

Varying the stabilization parameter $\tau$ from $10^{-4}$ to $10$,
we clearly observe that the condition number attains a minimum
around $\tau \sim 0.1$ when the face-based stabilization $j_h$ is employed.
Recalling from the convergence experiments that large choices of $\tau$
reduces the accuracy of the face-based stabilized surface method
considerably,
a good choice of $\tau$ should balance
both the accuracy of the numerical scheme
and the size and fluctuation of the condition number.
On the contrary, a satisfactory choice of $\tau$ is much less delicate
for the full gradient stabilization.
Indeed, while $\kappa_{\delta}(\mcA)$ as a function of $\delta$
fluctuates slightly more than for the face-based stabilization, the
condition number reveals itself as a monotonically decreasing function
of $\tau$. 

Finally, we note that the appearance of the normal gradient component
in the full gradient form sometimes has a certain stabilizing effect
on the condition number.
Comparing the magnitude 
of the condition number for a tangential gradient based discrete
system with its full gradient counterpart shows that the condition
number is significantly lower for certain surface positions.
Nevertheless, either diagonally preconditioning or additional
stabilization terms are necessary to obtain fully robust condition
numbers.
\begin{figure}[htb]
  \begin{center}
    \includegraphics[page=1,width=0.49\textwidth]{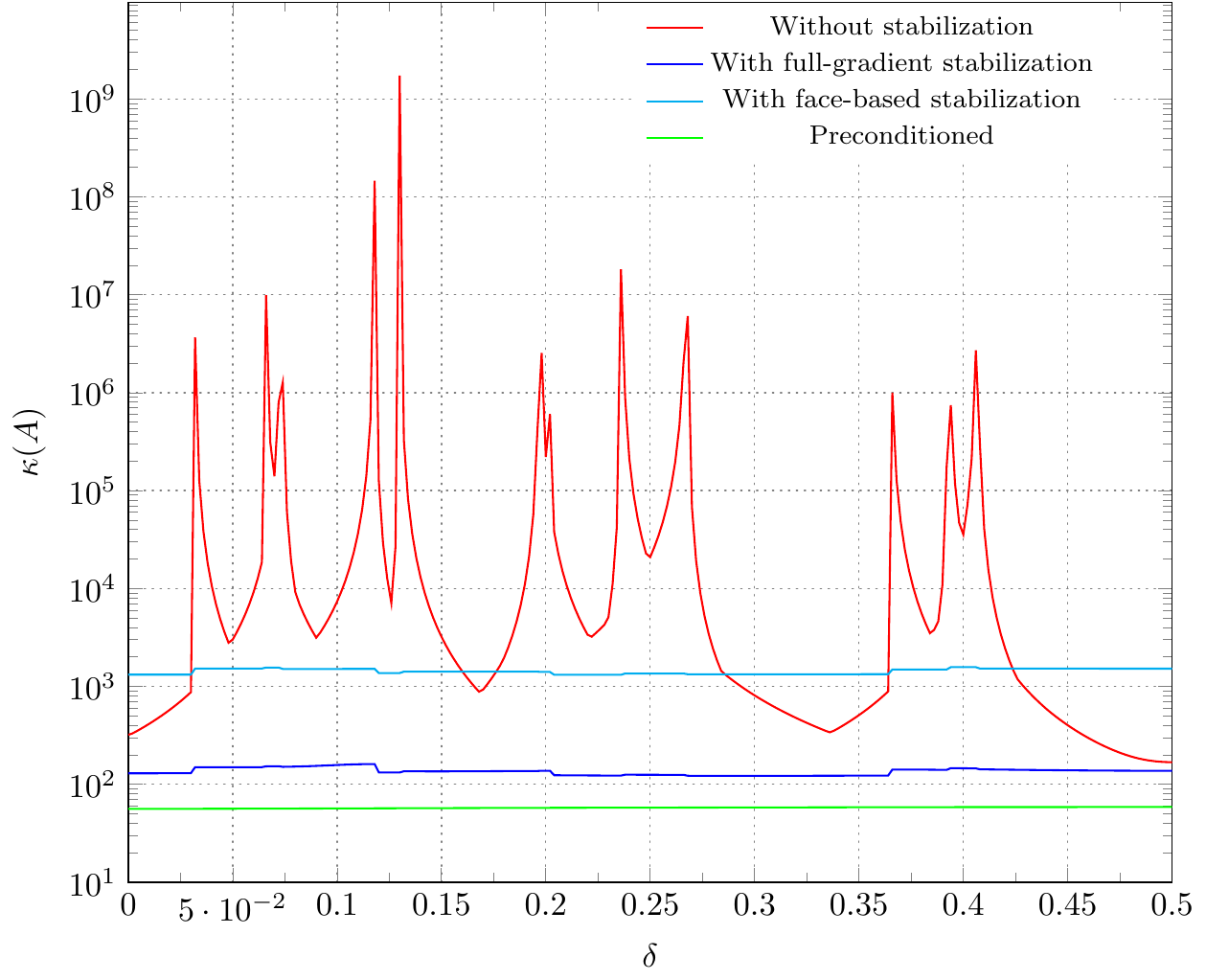}
    \caption{Condition numbers plotted as a function of the position
      parameter $\delta$. 
    \label{fig:condition_number-example-1}
  }
  \end{center}
\end{figure}
\begin{figure}[htb]
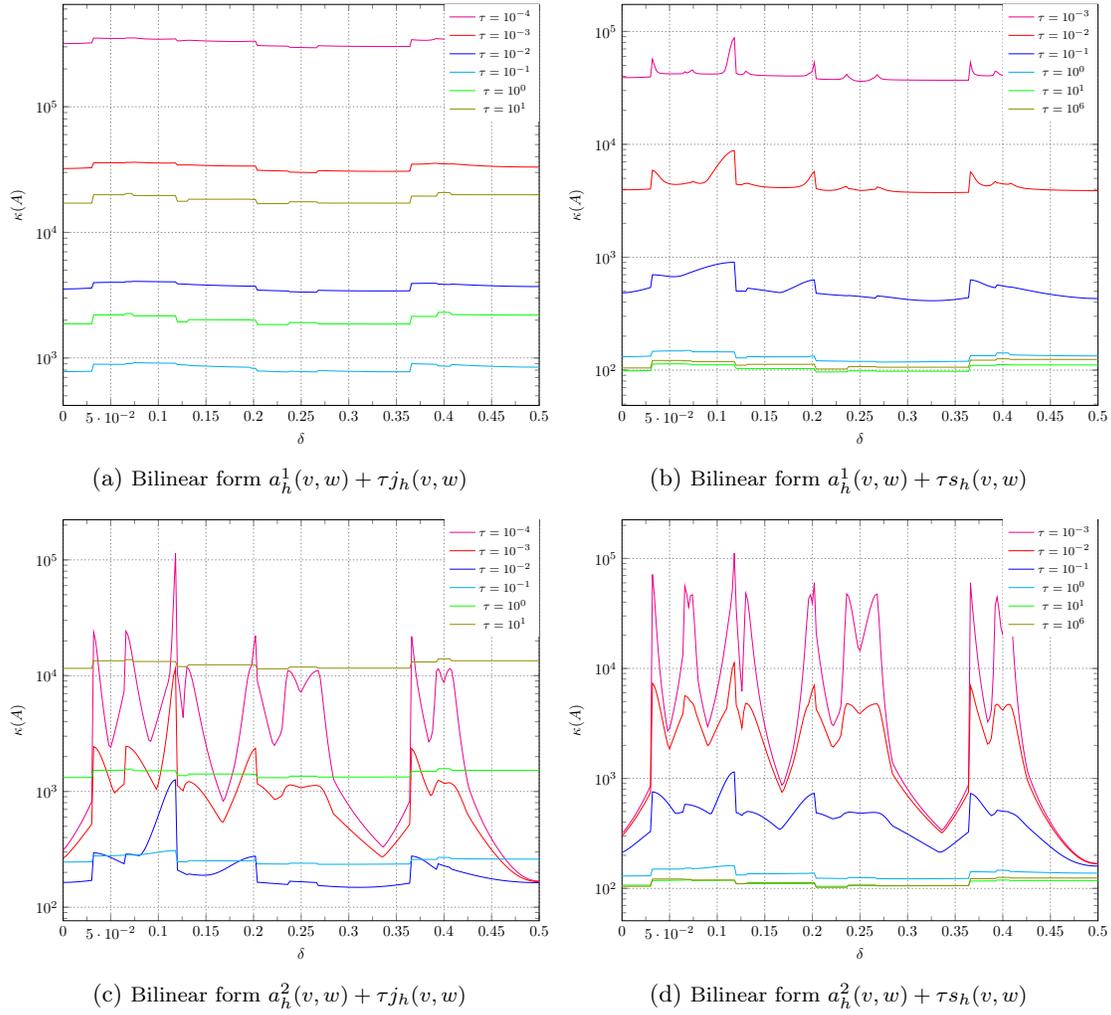

  \vspace{2ex}
  \begin{center}
    \begin{subfigure}[t]{0.49\textwidth}
    \includegraphics[page=5,width=1.00\textwidth]{figures/condition_number.pdf}
    \caption{\footnotesize Bilinear form $a^1_h(v,w) + \tau j_h(v,w)$}
    \end{subfigure}
    \begin{subfigure}[t]{0.49\textwidth}
    \includegraphics[page=4,width=1.00\textwidth]{figures/condition_number.pdf}
    \caption{\footnotesize Bilinear form $a^1_h(v,w) + \tau s_h(v,w)$}
    \end{subfigure}
    \\[2ex]
    \begin{subfigure}[t]{0.49\textwidth}
    \includegraphics[page=3,width=1.00\textwidth]{figures/condition_number.pdf}
    \caption{\footnotesize Bilinear form $a^2_h(v,w) + \tau j_h(v,w)$}
    \end{subfigure}
    \begin{subfigure}[t]{0.49\textwidth}
    \includegraphics[page=2,width=1.00\textwidth]{figures/condition_number.pdf}
    \caption{\footnotesize Bilinear form $a^2_h(v,w) + \tau s_h(v,w)$}
    \end{subfigure}
    \caption{Condition numbers plotted as a function of the position parameter $\delta$
      for different combinations of forms,
      stabilizations, and penalty parameters. 
  }
    \label{fig:condition_number-example-2}
  \end{center}
\end{figure}

\section*{Acknowledgements}
This research was supported in part by EPSRC, UK, Grant
No. EP/J002313/1, the Swedish Foundation for Strategic Research Grant
No.\ AM13-0029, the Swedish Research Council Grants Nos.\ 2011-4992,
2013-4708, 2014-4804, and Swedish strategic research programme eSSENCE.

\bibliographystyle{plainnat}
\bibliography{bibliography}

\end{document}